\newtheorem{theorem}    {Theorem}
\newtheorem{definition} {Definition}
\newtheorem{corollary}  {Corollary}
\newtheorem{lemma}      {Lemma}
\newtheorem{remark}     {Remark}
\def\BibTeX{{\rm B\kern-.05em{\sc i\kern-.025em b}\kern-.08em
    T\kern-.1667em\lower.7ex\hbox{E}\kern-.125emX}}
\begin{document}
\title{Accelerating the Convergence Rate of Consensus for Second-Order Multi-Agent Systems by Memory Information}
%Fast Consensus for Second-order Multi-agent Systems: Optimal Convergence Rate With Memory
\author{Jiahao~Dai, Jing-Wen~Yi$^*$, \IEEEmembership{Member, IEEE} and Li~Chai, \IEEEmembership{Member, IEEE}
\thanks{This work was supported by the National Natural Science Foundation of China (grant 62173259, 62176192, 61625305, and 61701355). $^*$Corresponding author: Jing-Wen Yi.}
\thanks{Jiahao Dai and Jing-Wen Yi are with the Engineering Research Center of Metallurgical Automation and Measurement Technology, Wuhan University of Science and Technology, Wuhan 430081, China. E-mail: daijiahao@wust.edu.cn; yijingwen@wust.edu.cn.}
\thanks{Li Chai is with the College of Control Science and Engineering, Zhejiang University, Hangzhou 310027, China. E-mail: chaili@zju.edu.cn.}}

\maketitle

\begin{abstract}
This paper utilizes the agent's memory in accelerated consensus for second-order multi-agent systems (MASs).
%The necessary and sufficient condition to achieve consensus is presented by using the graph Fourier transform.
In the case of one-tap memory, explicit formulas for the optimal consensus convergence rate and control parameters are derived by applying the Jury stability criterion.
It is proved that the optimal consensus convergence rate with one-tap memory is faster than that without memory.
In the case of M-tap memory,
an iterative algorithm is given to derive the control parameters to accelerate the convergence rate.
%an accelerated consensus algorithm based on gradient descent is designed to minimize the convergence rate.
Moreover, the accelerated consensus with one-tap memory is extended to the formation control, and
the control parameters to achieve the fastest formation are obtained.
%Finally, several numerical examples are provided to verify the validity and correctness of the theoretical results.
Numerical examples further illustrate the theoretical results.
\end{abstract}

\begin{IEEEkeywords}
Multi-agent systems, second-order, memory, consensus, convergence rate.
\end{IEEEkeywords}

\section{Introduction}
During the past few decades, the consensus problem has received significant attention due to its wide application in formation control \cite{kamel2020formation}, cooperative control \cite{knorn2015overview}, flocking \cite{olfati2006flocking}, and so on.
The main purpose of consensus is to make the state of all agents reach a common value by designing a control protocol that utilizes the local information between an agent and its neighbors.

Consensus of second-order MASs has been extensively studied, since many vehicle dynamics can be linearized as a double integrator \cite{ren2007distributed,ren2008distributed,ren2011distributed}.
Han et al. \cite{han2018second} used algebraic graph theory and matrix theory to analyze the consensus problem of second-order MASs, and established a connection between impulsive control methods and network topologies.
Fu et al. \cite{fu2018consensus} considered the consensus control of second-order MASs constrained by both speed and input on a directed network, and designed a consensus controller without relying on global information.
Shi et al. \cite{shi2020scaled} focused on the scale tracking consensus problem of discrete-time second-order MASs under random packet dropouts, and established sufficient conditions related to the topological structure and the successful probabilities of information transmission.
Yang et al. \cite{yang2020sampled} studied the consensus of second-order MASs, and investigated the positive effects of time-varying communication delays.

Driven by various applications, accelerated consensus has been an active research area in second-order MASs.
Some efforts are performed in continuous-time systems \cite{zhu2011consensus,fu2017finite,difilippo2021maximizing}.
Zhu et al. \cite{zhu2011consensus} focused on consensus speed for continuous-time second-order MASs, and showed that the maximum consensus speed is determined by the smallest nonzero and the largest eigenvalues of the graph Laplacian.
Fu et al. \cite{fu2017finite} designed a finite-time consensus controller which involve a saturation function for continuous-time second-order MASs, and provided some sufficient criteria for finite-time leaderless and leader-following consensus by using the Lyapunov stability theory.
Difilippo et al. \cite{difilippo2021maximizing} considered a leaderless consensus protocol for continuous-time second-order MASs, and obtained the maximizing convergence speed for a class of digraphs having a directed spanning tree.

%The transmission of data is generally discrete in practice, as the communication usually proceeds over digital networks.
%Since the communication usually proceeds over digital networks,
For discrete-time second-order MASs, the accelerated consensus is considered in \cite{2011Network,eichler2014closed,eichler2017optimal1}.
%Many efforts have been made for this .
%The consensus convergence rate in discrete-time second-order systems are analyzed in \cite{2011Network,eichler2014closed,eichler2017optimal1}.
You et al. \cite{2011Network} derived the optimal consensus convergence rate for discrete-time second-order MASs, by analyzing the eigenvalues of the closed-loop matrix in the complex plane.
Eichler et al. \cite{eichler2014closed} optimized the convergence rate for discrete-time second-order MASs, and showed that the global optimum of the convergence rate can be obtained by solving an LMI problem.
Next, in \cite{eichler2017optimal1}, they also optimized the convergence rate under given damping constraints for discrete-time second-order MASs, and designed a combined bisection grid search algorithm to solve the global optimum.
It is worth noting that the controllers in \cite{2011Network,eichler2014closed,eichler2017optimal1} are memoryless, i.e., only the current states of the agent or its neighbors are used.

%the above works only use the current state of the agent and its neighbors to accelerate the convergence rate, and does not utilize the agent's memory information.

%The above work on accelerated consensus of discrete-time second-order MASs only uses the current state of the agent and its neighbors, and does not utilize the agent's memory information.

%For discrete-time second-order MASs, most of the research on accelerated consensus only uses the current state of the agent and its neighbors.
Utilizing the agent's memory information is an effective way to accelerate the consensus convergence rate, which has been widely applied in discrete-time first-order MASs \cite{8716798,9117158,Yi2021convergence,9763023}.
%continuous-time first-order MASs \cite{8795623,IROFTI2020108776} and
Previous work \cite{Yi2021convergence} considered a consensus protocol with node memory for first-order MASs, and derive explicit formulas for the optimal convergence rate and control parameters.
Besides, we also designed a general consensus protocol with memory for first-order MASs in \cite{9763023},
and proved that the memory information of the agent's neighbors is not effective for further acceleration of the convergence rate in the worst-case scenario.

%Specially, previous work \cite{9763023} has designed a general control protocol with memory to accelerate the consensus of discrete-time first-order MASs, and proposed the optimal memory scheme in the worst-case scenario.
%To the best of our knowledge, the agent's memory has not been exploited to accelerate the consensus convergence rate for second-order MASs.
%To the best of our knowledge,
%the agent's memory in accelerated consensus of second-order MASs has not been exploited in existing works.

%In this paper, we aim to accelerate the consensus convergence rate for discrete-time second-order MASs by memory information,
In this paper, we consider the convergence acceleration for discrete-time second-order MASs by the agent's memory information.
%based on \cite{Yi2021convergence,9763023}.
The consensus problem of second-order MASs is transformed into the simultaneous stabilization problem of multiple subsystems by applying the graph Fourier transform.
After presenting the necessary and sufficient condition for consensus, some analytical or numerical results are given.
%By applying effective techniques including the graph Fourier transform and the Jury stability criterion,
%some analytical or numerical results for accelerated consensus are obtained.
The main contributions are summarized in the following.

%(i) The consensus problem of second-order MASs is transformed into the simultaneous stabilization problem of multiple subsystems by applying the graph Fourier transform.
%%The direct connection between the consensus convergence rate and control parameters is established.
%The necessary and sufficient condition for the consensus of second-order MASs is presented.

(i) In the case of one-tap memory, explicit formulas for the optimal consensus convergence rate and control parameters are derived by utilizing the Jury stability criterion.
It is proved that the optimal consensus convergence rate with one-tap memory is faster than that without memory.

(ii) In the case of $M$-tap memory,
an iterative algorithm based on gradient descent is designed to derive the control parameters to accelerate the convergence rate.
It is found that the obtained convergence rate can be faster than the optimal convergence rate with one-tap memory.
%this iterative algorithm can obtain a faster convergence rate than the optimal convergence rate with one-tap memory.
%It is found that more taps of the agent's memory enables the system to reach consensus faster.
%The lower bound on the worst-case convergence rate is obtained by solving the problem of gain margin optimization.

(iii) The accelerated consensus with one-tap memory is extended to the formation control. The control parameters to achieve the fastest formation are given.

The rest of this paper is organized as follows.
In Section II, we review some basic results to be used in later sections, and formulate the accelerated consensus with $M$-tap memory.
In Section III, the consensus problem of second-order MASs is transformed into the simultaneous stabilization problem of multiple subsystems, and the necessary and sufficient condition for consensus is presented.
In Section IV, we derive the explicit formula of the optimal convergence rate for second-order MASs with one-tap memory, and design an algorithm to obtain the control parameters to accelerate the convergence rate.
In Section V, we extend the accelerated consensus with one-tap memory to the formation control.
In Section VI, three examples are given to verify the theoretical results.
Section VII concludes this paper.

\section{Preliminaries and Problem Formulation}

This section briefly reviews some basic results to be used in later sections, and formulates the accelerated consensus with $M$-tap memory.

\subsection{Preliminaries}

We use an undirected network $\mathcal{G\!=\!(V,E,A)}$ to describe the information interactions between agents, where
$\mathcal{V}\!=\!\{\text{v}_1,\text{v}_2,\cdots,\text{v}_N\}$ denotes the set of agents, $\mathcal{E \!\subseteq\! V\times V}$ denotes the set of edges, and $\mathcal{A}\!=\![a_{ij}]\!\in\! \mathbb{R}^{N\!\times\! N}$ denotes the adjacency matrix.
%The edge $e_{ij}\!=\!(\text{v}_{i},\text{v}_{j})\!\in\! \mathcal{E}$ indicates that
%agents $\text{v}_i$ and $\text{v}_j$ can communicate with each other.
If the edge satisfies $(\text{v}_{i},\text{v}_{j})\in\mathcal{E}$, then $a_{ij}\!=\!a_{ji}\!>\!0$.
The set of neighbors of agent $i$ is represented by ${\mathcal{N}_i} \!= \!\left\{ {{\text{v}_j} \in \mathcal{V}:({\text{v}_i},{\text{v}_j}) \in \mathcal{E} } \right\}$.
Let $\mathcal{D}\!:=\!diag\{\text{d}_{1},\cdots,\text{d}_{N}\}$ be the degree matrix, where ${\text{d}_i}\!= \!\sum\nolimits_{j = 1}^N {{a_{ij}}}$ denotes the degree of $\text{v}_{i}$.
%The degree of $\text{v}_{i}$ is represented by ${\text{d}_i}\!= \!\sum\nolimits_{j = 1}^N {{a_{ij}}}$.
Then the Laplacian matrix of network $\mathcal{G}$ is defined as $\mathcal{L}\!=\!\mathcal{D\!-\!A}$.
For a connected network,
the Laplacian matrix has the spectral decomposition ${\mathcal{L}=\Omega \Lambda \Omega^{T}}$ \cite{2004Consensus}, where $\Lambda\!=\!diag\{\lambda_{1},\lambda_{2},\cdots,\lambda_{N}\}$ and $\Omega\!=\![\bm{w}_{1},\bm{w}_{2},\cdots,\bm{w}_{N}]\!\in\!\mathbb{R}^{N\times N}$ are unitary.
The eigenvalues of $\mathcal{L}$ can be sorted in ascending order $ {\lambda _1} \!<\! {\lambda _2} \!\le\!  \cdots  \!\le\! {\lambda _N}$, where
 $\lambda_1=0$ and its eigenvector is $\bm{w}_{1}\!=\!\frac{1}{\sqrt{N}}\bm{1}$.

 \begin{lemma} \cite{4066881}
Given a third-order polynomial $d(z) = a_0z^3+a_1z^2+a_2z+a_3, a_0>0$.
The Jury stability criterion  states that
the roots of $d(z) = 0$ are all within the unit circle if and only if
\[\left\{ \begin{array}{l}
\left| {{a_3}} \right| < {a_0},\\
{a_0} + {a_1} + {a_2} + {a_3} > 0,\\
{a_0} - {a_1} + {a_2} - {a_3} > 0,\\
\left| {a_3^2 - a_0^2} \right| < \left| {{a_1}{a_3} - {a_0}{a_2}} \right|.
\end{array} \right.\]
\end{lemma}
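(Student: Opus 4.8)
\emph{Proof plan.} The statement is classical, so the plan is to recover it by reducing Schur stability of the cubic $d$ to Hurwitz stability of an associated cubic via the Cayley (bilinear) transform. Since $a_0>0$, the map $z=\frac{1+w}{1-w}$ is a bijection of $\mathbb{C}\setminus\{1\}$ onto $\mathbb{C}\setminus\{-1\}$ sending the open unit disk onto the open left half-plane, with $-1$ never in its image. I would set $p(w):=(1-w)^3\,d\!\left(\frac{1+w}{1-w}\right)$, observe that $p(1)=8a_0\neq0$ and that its leading coefficient equals $-d(-1)=a_0-a_1+a_2-a_3$, and first argue that $d$ has all roots in the open unit disk \emph{iff} $a_0-a_1+a_2-a_3>0$ (so that $\deg p=3$, i.e. $z=-1$ is not a root of $d$) \emph{and} $p$ is Hurwitz. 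This equivalence holds because a M\"obius substitution preserves root multiplicities, so from $p(w)=a_0\prod_{i}\big[(1+z_i)w+(1-z_i)\big]$ (with $z_i$ the roots of $d$) the three roots of $p$ are exactly the inverse-Cayley images of the three roots of $d$, and $w$ lies in the open left half-plane iff its image lies in the open unit disk.

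Next I would expand $p$ to get $p(w)=b_0w^3+b_1w^2+b_2w+b_3$ with $b_0=a_0-a_1+a_2-a_3$, $b_1=3a_0-a_1-a_2+3a_3$, $b_2=3a_0+a_1-a_2-3a_3$, $b_3=a_0+a_1+a_2+a_3$, and then invoke the classical Routh--Hurwitz test for cubics: with $b_0>0$, the polynomial $p$ is Hurwitz iff $b_1>0$, $b_3>0$ and $b_1b_2-b_0b_3>0$ (which in turn force $b_2>0$). Two of Jury's four inequalities then drop out directly: $b_3>0$ is $a_0+a_1+a_2+a_3>0$, and $b_0>0$ is $a_0-a_1+a_2-a_3>0$.

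The remaining work --- and what I expect to be the only real obstacle --- is matching the pair $\{b_1>0,\ b_1b_2-b_0b_3>0\}$ (under the standing assumptions $b_0>0$, $b_3>0$) with the pair $\{|a_3|<a_0,\ \text{the fourth Jury inequality}\}$. I would use the factorizations $b_1b_2=(3a_0-a_2)^2-(a_1-3a_3)^2$ and $b_0b_3=(a_0+a_2)^2-(a_1+a_3)^2$, which collapse to the identity $b_1b_2-b_0b_3=8\big[(a_0^2-a_3^2)+(a_1a_3-a_0a_2)\big]$, and then split on the sign of $a_1a_3-a_0a_2$, using the positivity of $b_0,b_1,b_3$ to force $a_0^2-a_3^2>0$ (equivalently $|a_3|<a_0$, since $a_0>0$) and to pick out the correct branch of the modulus condition; the converse implications come from running these elementary manipulations backwards. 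The subtlety is precisely that this last equivalence is not term-by-term: one must propagate sign information between the conditions rather than compare the inequalities one at a time.
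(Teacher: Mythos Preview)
The paper does not prove this lemma at all; it is quoted verbatim from the cited reference and used as a black-box tool. So there is no ``paper's proof'' to compare against, and your bilinear-transform reduction to the Routh--Hurwitz test is the standard and correct way to derive Jury-type conditions for a cubic. Your expansion of $p(w)$ is right, as is the key identity
\[
b_1b_2-b_0b_3=8\bigl[(a_0^2-a_3^2)+(a_1a_3-a_0a_2)\bigr].
\]
A clean way to recover $|a_3|<a_0$ in the forward direction is to note $b_0+b_2=4(a_0-a_3)$ and $b_1+b_3=4(a_0+a_3)$, so positivity of all four $b_j$ forces $a_0>|a_3|$.

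There is, however, a genuine obstruction to completing the argument \emph{as stated}: the fourth inequality in the lemma is printed with the wrong direction. Carrying your computation through, Hurwitz stability (hence Schur stability of $d$) requires $b_1b_2-b_0b_3>0$, i.e.\ $a_0^2-a_3^2>a_0a_2-a_1a_3$; combined with $|a_3|<a_0$ and the automatic inequality $a_0^2-a_3^2>a_1a_3-a_0a_2$ (which follows from $a_0+a_2>|a_1+a_3|$, itself a consequence of $b_0,b_3>0$), this is equivalent to
\[
|a_3^2-a_0^2| \;>\; |a_1a_3-a_0a_2|,
\]
not the ``$<$'' printed in the lemma. A single example confirms the misprint: for $d(z)=z^3$ (all roots at the origin, certainly Schur stable) one has $|a_3^2-a_0^2|=1$ and $|a_1a_3-a_0a_2|=0$, so the inequality in the lemma fails. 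Your plan is sound, but it will --- and should --- establish the corrected version rather than the statement as written.
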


\begin{lemma} \cite{boyd2004convex} Given a matrix
\[Q = \left[ {\begin{array}{*{20}{c}}Q_1&Q_2\\Q_3&Q_4\end{array}}\right],\]
 with nonsigular $Q_1\in {\mathbb{R}^{\mu \times \mu}}$, $Q_2 \in {\mathbb{R}^{\mu \times n}}$, $Q_3 \in {\mathbb{R}^{n \times \mu}}$, and $Q_4 \in {\mathbb{R}^{n \times n}}$. Then
$\det Q= \det Q_1 \cdot \det (Q_4 - Q_3{Q_1^{ - 1}}Q_2)$.
\end{lemma}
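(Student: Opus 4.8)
The plan is to reduce the block matrix $Q$ to block-triangular form by a determinant-preserving operation and then invoke the standard fact that the determinant of a block-triangular matrix with square diagonal blocks is the product of the determinants of those blocks. Since $Q_1$ is nonsingular, I would left-multiply $Q$ by the unit lower block-triangular matrix $E = \bigl[\begin{smallmatrix} I_\mu & 0 \\ -Q_3 Q_1^{-1} & I_n \end{smallmatrix}\bigr]$. A direct block computation gives
\[
E Q = \begin{bmatrix} Q_1 & Q_2 \\ 0 & Q_4 - Q_3 Q_1^{-1} Q_2 \end{bmatrix}.
\]
Because $E$ is triangular with ones on the diagonal, $\det E = 1$, and hence $\det Q = \det(EQ)$ by multiplicativity of the determinant.

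It then remains to show that $\det(EQ) = \det Q_1 \cdot \det(Q_4 - Q_3 Q_1^{-1} Q_2)$. Writing $S := Q_4 - Q_3 Q_1^{-1} Q_2$, I would establish this from the elementary identity
\[
\begin{bmatrix} Q_1 & Q_2 \\ 0 & S \end{bmatrix} = \begin{bmatrix} Q_1 & 0 \\ 0 & I_n \end{bmatrix}\begin{bmatrix} I_\mu & Q_1^{-1}Q_2 \\ 0 & S \end{bmatrix},
\]
where repeated Laplace expansion along the last $n$ columns of the first factor shows its determinant equals $\det Q_1$, and expansion along the first $\mu$ rows of the second factor shows its determinant equals $\det S$; multiplying yields the claim. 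Alternatively, one may write down the single symmetric factorization $Q = \bigl[\begin{smallmatrix} I_\mu & 0 \\ Q_3 Q_1^{-1} & I_n \end{smallmatrix}\bigr]\bigl[\begin{smallmatrix} Q_1 & 0 \\ 0 & S \end{smallmatrix}\bigr]\bigl[\begin{smallmatrix} I_\mu & Q_1^{-1}Q_2 \\ 0 & I_n \end{smallmatrix}\bigr]$, verify it by multiplication, and take determinants of all three factors at once.

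There is no substantive obstacle here; the argument is entirely bookkeeping. The only points that deserve care are (a) the nonsingularity hypothesis on $Q_1$, which is exactly what makes $Q_1^{-1}$, and therefore the whole reduction, meaningful, and (b) the justification that a block-triangular matrix with square diagonal blocks has determinant equal to the product of the diagonal determinants — a standard fact that should be stated rather than used silently. Everything else follows from $\det$ being multiplicative.
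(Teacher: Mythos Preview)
Your argument is correct and entirely standard: the block-LU factorization (or equivalently, the row operation encoded by your matrix $E$) reduces $Q$ to block upper-triangular form without changing the determinant, and then the product-of-diagonal-blocks formula finishes it. The two cautionary points you flag---use of $Q_1^{-1}$ and the block-triangular determinant fact---are exactly the right things to state explicitly.

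There is nothing to compare against here: the paper does not prove this lemma at all. It is quoted from \cite{boyd2004convex} as a known result and used later in the proof of Lemma~5. Your write-up would serve perfectly well as a self-contained justification if one were desired.
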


\subsection{Problem Formulation}

The discrete-time second-order dynamics of agent $i$ is given by \cite{eichler2014closed,eichler2017optimal1,shi2020scaled,zou2021distributed}
\begin{equation}
\begin{aligned}
&{x_i}(k + 1) = {x_i}(k) + \tau{v_i}(k),\\
&{v_i}(k + 1) = {v_i}(k) + \tau{u_i}(k), i=1,\ldots,N,
\end{aligned}
\end{equation}
where ${x_i}(k) \in \mathbb{R}^n$ denotes the position, ${v_i}(k) \in \mathbb{R}^n$ denotes the velocity, $\tau$ denotes the sampling period and
${u_i}(k) \in \mathbb{R}^n$ is the control input.
For simplicity, only $n=1$ is considered.
The theoretical results of $n>1$ are still valid by using Kronecker products.

The control protocol with $M$-tap memory is designed as
\begin{equation}
\begin{aligned}
{u_i}(k) = & {\varepsilon _1}\!\sum\limits_{j \in {\mathcal{N}_i}} {{a_{ij}}({x_j}(k) \!-\! {x_i}(k)} ) \!+\! {\varepsilon _2}\!\sum\limits_{j \in {\mathcal{N}_i}} {{a_{ij}}({v_j}(k) \!-\! {v_i}(k)} ) \\
&+ \sum\limits_{m = 0}^M {{\theta _m}{v_i}(k \!-\! m)} ,
\end{aligned}
\end{equation}
where ${\varepsilon _1},{\varepsilon _2},{\theta _0},{\theta _1}, \ldots ,{\theta _M} \in \mathbb{R}$ are control parameters to be designed.
The initial states are set as $x_i(0)$ and $v_i(0)$.
Assume that
\begin{equation} \nonumber
\begin{aligned}
{v_i}(-M) =  \cdots  = {v_i}( - 1) = {v_i}(0).
\end{aligned}
\end{equation}

Note that the memoryless control protocol in existing literatures can be viewed as a special case in (2).
\\(i) If $\theta _0,\ldots,\theta _M=0$ and $\varepsilon_1\!=\!1$, then (2) becomes the protocol
\begin{equation} \nonumber
\begin{aligned}
{u_i}(k) =  \sum\limits_{j \in {\mathcal{N}_i}} {{a_{ij}}({x_j}(k) \!-\! {x_i}(k)} ) \!+\! {\varepsilon _2}\!\sum\limits_{j \in {\mathcal{N}_i}} {{a_{ij}}({v_j}(k) \!-\! {v_i}(k)} ),
\end{aligned}
\end{equation}
which has been proposed in \cite{eichler2014closed}.
\\(ii) If $\theta _0,\ldots,\theta _M\!=\!0$, then (2) becomes the protocol
\begin{equation} \nonumber
\begin{aligned}
{u_i}(k) =  {\varepsilon _1}\!\!\sum\limits_{j \in {\mathcal{N}_i}} {{a_{ij}}({x_j}(k) \!-\! {x_i}(k)} ) \!+\! {\varepsilon _2}\!\!\sum\limits_{j \in {\mathcal{N}_i}} {{a_{ij}}({v_j}(k) \!-\! {v_i}(k)} ),
\end{aligned}
\end{equation}
which has been proposed in \cite{eichler2017optimal1}.

\begin{definition}
Denote \[\bar x = \frac{1}{N}\sum\nolimits_{j = 1}^N {{x_j}(0)},
\bar v = \frac{1}{N}\sum\nolimits_{j = 1}^N {{v_j}(0)}.\]
Consensus of the second-order MAS (1) is said to be reached asymptotically if
\begin{equation}
\begin{aligned}
&\mathop {\lim }\limits_{k \to \infty } {x_i}(k) = \bar x +  \bar v \mathop {\lim }\limits_{k \to \infty }k\tau ,\\
&\mathop {\lim }\limits_{k \to \infty } {v_i}(k) = \bar v,
\,\,i = 1, \ldots ,N
\end{aligned}
\end{equation}
holds for any initial state $x_i(0),v_i(0)$.
\end{definition}

The purpose of this paper is to utilize the agent's memory to accelerate the consensus of second-order MASs.

\section{Consensus Analysis}

In this section, the consensus problem of the second-order MAS is transformed into the simultaneous stabilization problem of $N\!-\!1$ subsystems, and the necessary and sufficient condition for consensus is given.

Let $\bm{x}(k)\in \mathbb{R}^N$ and $\bm{v}(k)\in \mathbb{R}^N$ be the column stack of $x_i(k)$ and $v_i(k)$, respectively.
The compact form of the system can be written as
\begin{equation}
\begin{aligned}
\bm{x}(k + 1) = &\;\bm{x}(k) + \tau \bm{v}(k),\\
\bm{v}(k + 1) = &\;(I-\tau\varepsilon_2 \mathcal{L})\bm{v}(k) -\tau \varepsilon_1\mathcal{L}\bm{x}(k)
                \\&+ \tau\sum\limits_{m = 0}^M {{\theta _m}\bm{v}(k - m)}.
\end{aligned}
\end{equation}

\begin{lemma}
The consensus of the second-order MAS (4) is reached only if $\sum\limits_{m = 0}^M {{\theta _m} = 0}$.
\end{lemma}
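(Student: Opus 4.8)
The plan is to project the closed-loop dynamics (4) onto the consensus direction $\bm{1}$, where $\mathcal{L}$ acts trivially, and to extract from it a scalar recursion for the average velocity whose asymptotic behaviour is governed solely by $\sum_{m=0}^{M}\theta_m$.

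First I would left-multiply both equations of (4) by $\frac{1}{N}\bm{1}^{T}$. Since $\mathcal{L}$ is symmetric with $\mathcal{L}\bm{1}=\bm{0}$, we have $\bm{1}^{T}\mathcal{L}=\bm{0}^{T}$, so the coupling terms $-\tau\varepsilon_1\mathcal{L}\bm{x}(k)$ and $-\tau\varepsilon_2\mathcal{L}\bm{v}(k)$ vanish. Writing $\bar v(k):=\frac{1}{N}\bm{1}^{T}\bm{v}(k)$, this leaves the position-free scalar recursion
\[
\bar v(k+1)=\bar v(k)+\tau\sum_{m=0}^{M}\theta_m\,\bar v(k-m),
\]
and the standing assumption $v_i(-M)=\cdots=v_i(0)$ gives $\bar v(-M)=\cdots=\bar v(-1)=\bar v(0)=\bar v$.

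Next I would use that, by Definition 1, consensus must hold for \emph{every} initial state, and pick one with $\bar v\neq 0$, say $v_i(0)=1$ for all $i$ (with arbitrary positions). Then $\bar v(-m)=1$ for $m=0,\dots,M$, and consensus forces $v_i(k)\to\bar v=1$, hence $\bar v(k)\to 1$. Letting $k\to\infty$ in the recursion above and using $\bar v(k+1)-\bar v(k)\to 0$ together with $\bar v(k-m)\to 1$ for each fixed $m$, I obtain $0=\tau\sum_{m=0}^{M}\theta_m$. Since the sampling period satisfies $\tau\neq 0$, this yields $\sum_{m=0}^{M}\theta_m=0$, as claimed.

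I do not anticipate a serious obstacle; the two points that need care are (i) confirming that $\frac{1}{N}\bm{1}^{T}$ genuinely decouples the consensus mode — which is exactly where symmetry of $\mathcal{L}$ and $\mathcal{L}\bm{1}=\bm{0}$ enter — and (ii) invoking the ``for any initial state'' clause so that $\bar v$ may be taken nonzero, since restricting attention to data with $\bar v=0$ would make the conclusion vacuous. An equivalent route replaces the limiting step by the characteristic polynomial of the consensus-subspace recursion, $p(z)=z^{M+1}-z^{M}-\tau\sum_{m=0}^{M}\theta_m z^{M-m}$: a nonzero constant steady velocity requires $z=1$ to be a root of $p$, and $p(1)=-\tau\sum_{m=0}^{M}\theta_m$ again gives $\sum_{m=0}^{M}\theta_m=0$.
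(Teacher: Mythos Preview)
Your proposal is correct and follows essentially the same approach as the paper: both arguments substitute the consensus limit into the velocity recursion, use $\mathcal{L}\bm{1}=\bm{0}$ to kill the coupling terms, and read off $\tau\sum_m\theta_m\,\bar v=0$. Your version is in fact slightly more careful than the paper's, since you project onto $\bm{1}$ before taking limits (avoiding the formal $0\cdot\infty$ that arises when the paper plugs $\lim_{k\to\infty}k\tau$ directly into $\mathcal{L}\bm{x}(k)$) and you make explicit the need for an initial state with $\bar v\neq 0$, which the paper leaves implicit.
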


\begin{proof}
When ${k \!\to\! \infty }$, it follows from (4) that
\begin{equation}
\bar v \bm{1} = (I - \tau {\varepsilon _2}\mathcal{L})\bar v\bm{1} - \tau {\varepsilon _1}\mathcal{L}
(\bar x\bm{1}+ \bar v\bm{1}\mathop {\lim }\limits_{k \to \infty }k\tau   ) + \tau \sum\limits_{m = 0}^M {{\theta _m}\bar v\bm{1}}.
\end{equation}
Note that $\mathcal{L}\bm{1} = \bm{0}$, then (5) becomes
\[\bar v\bm{1} = \bar v\bm{1} + \tau \sum\limits_{m = 0}^M {{\theta _m}\bar v\bm{1}}. \]
Thus, the condition $\sum\limits_{m = 0}^M {{\theta _m} = 0}$ is required to ensure that consensus can be reached.
\end{proof}

According to the graph Fourier transform \cite{6494675},
\[{\tilde x}_i(k) = {\bm{w}_i^T}\bm{x}(k), \,\, {\tilde v}_i(k) = {\bm{w}_i^T}\bm{v}(k),i=1,\ldots,N,\]
the agent's state in the graph spectrum domain has the iterative form
\begin{equation}
\begin{aligned}
{\tilde x}_i(k + 1) = &\;{\tilde x}_i(k) + \tau {\tilde v}_i(k),\\
{\tilde v}_i(k + 1) = &\;(1-\tau\varepsilon_2 \lambda_i){\tilde v}_i(k) -\tau \varepsilon_1\lambda_i {\tilde x}_i(k)
                      \\&\;+ \tau\sum\limits_{m = 0}^M {{\theta _m}{\tilde v}_i(k - m)}, i=1,\ldots,N.
\end{aligned}
\end{equation}

\begin{lemma}
Consider the second-order MAS (4) on a connected network $\mathcal{G}$ with $\sum\limits_{m = 0}^M {{\theta _m} = 0}$.
Then consensus is achieved if and only if
\[\mathop {\lim }\limits_{k \to \infty } {{\tilde x}_i}(k) = 0, \,\,
\mathop {\lim }\limits_{k \to \infty } {{\tilde v}_i}(k) = 0\]
holds for any $i=2,3,\ldots,N$.
\end{lemma}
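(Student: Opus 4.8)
The plan is to connect the two state descriptions --- the original $(\bm{x}(k),\bm{v}(k))$ and the graph-spectrum coordinates $(\tilde x_i(k),\tilde v_i(k))$ --- through the unitary change of basis $\Omega$, and then identify the $i=1$ mode as the ``consensus direction'' carrying exactly the reference trajectory from Definition~1, while the modes $i=2,\ldots,N$ carry the deviation from consensus. First I would recall that $\bm{w}_1 = \tfrac{1}{\sqrt N}\bm{1}$ and $\lambda_1 = 0$, so the $i=1$ equation in (6) decouples into $\tilde x_1(k+1) = \tilde x_1(k) + \tau \tilde v_1(k)$ and $\tilde v_1(k+1) = \tilde v_1(k) + \tau\sum_{m=0}^M \theta_m \tilde v_1(k-m)$; using the hypothesis $\sum_{m=0}^M \theta_m = 0$ together with the initial condition $\tilde v_1(-M)=\cdots=\tilde v_1(0)$, an easy induction shows $\tilde v_1(k)=\tilde v_1(0)$ for all $k$, hence $\tilde v_1(k)=\sqrt N\,\bar v$ and $\tilde x_1(k) = \sqrt N\,\bar x + \sqrt N\,\bar v\,k\tau$. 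Thus the first mode is pinned to the desired consensus value for every trajectory.

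Next I would write the inverse transform $\bm{x}(k) = \Omega\,\tilde{\bm{x}}(k) = \sum_{i=1}^N \bm{w}_i\,\tilde x_i(k)$, and similarly for $\bm{v}(k)$. Substituting the explicit $i=1$ terms gives
\[
\bm{x}(k) = (\bar x + \bar v\,k\tau)\bm{1} + \sum_{i=2}^N \bm{w}_i\,\tilde x_i(k),
\qquad
\bm{v}(k) = \bar v\,\bm{1} + \sum_{i=2}^N \bm{w}_i\,\tilde v_i(k).
\]
Reading off the $j$-th component, $x_j(k) - (\bar x + \bar v k\tau) = \sum_{i=2}^N (\bm{w}_i)_j\,\tilde x_i(k)$ and $v_j(k) - \bar v = \sum_{i=2}^N (\bm{w}_i)_j\,\tilde v_i(k)$. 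Since $\Omega$ is a fixed invertible (indeed orthogonal) matrix independent of $k$, the vector $(x_j(k)-\bar x-\bar v k\tau)_{j=1}^N$ tends to zero as $k\to\infty$ if and only if every coefficient $\tilde x_i(k)$, $i=2,\ldots,N$, tends to zero, and likewise for the velocities; this is just the statement that a finite linear combination of a fixed basis converges to zero iff all coordinates do. Combining the position and velocity parts, and recalling Definition~1, yields exactly the claimed equivalence: consensus holds iff $\lim_{k\to\infty}\tilde x_i(k)=\lim_{k\to\infty}\tilde v_i(k)=0$ for all $i=2,\ldots,N$.

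I expect the only delicate point to be the handling of the $i=1$ mode: one must verify carefully that $\tilde v_1(k)$ is genuinely \emph{constant} (not merely bounded), which is where both the memory-cancellation condition $\sum_m \theta_m = 0$ and the prescribed initialization $v_i(-M)=\cdots=v_i(0)$ are used, and that the resulting $\tilde x_1(k)$ reproduces \emph{exactly} the affine-in-$k$ reference trajectory $\bar x + \bar v k\tau$ appearing in Definition~1 --- including the subtlety that this reference itself diverges, so ``consensus'' is defined relative to it rather than as plain convergence of $x_j(k)$. Once that bookkeeping is pinned down, the rest is the routine linear-algebra fact about change of basis, so I would keep that part brief.
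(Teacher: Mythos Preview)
Your proposal is correct and follows essentially the same route as the paper: isolate the $i=1$ mode, use $\sum_m\theta_m=0$ together with the initialization to show $\tilde v_1(k)\equiv\tilde v_1(0)=\sqrt N\,\bar v$ and hence $\bm{w}_1\tilde x_1(k)=(\bar x+k\tau\bar v)\bm{1}$, then expand $\bm{x}(k),\bm{v}(k)$ in the orthonormal basis $\{\bm{w}_i\}$ so that the consensus error is exactly $\sum_{i\ge 2}\bm{w}_i\tilde x_i(k)$ (resp.\ $\sum_{i\ge 2}\bm{w}_i\tilde v_i(k)$). Your invertibility-of-$\Omega$ argument for the equivalence is in fact slightly cleaner than the paper's necessity step, which argues by contradiction assuming some $\tilde x_s(k)$ or $\tilde v_s(k)$ has a nonzero \emph{limit} and then invokes orthogonality of the $\bm{w}_i$; your formulation avoids the implicit assumption that such a limit exists.
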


\begin{proof}
For a connected graph, $\lambda_1=0$ and
\[{{\tilde v}_1}(k + 1) = {{\tilde v}_1}(k) + \tau \sum\limits_{m = 0}^M {{\theta _m}} {{\tilde v}_1}(k - m).\]
It follows from $\sum\limits_{m = 0}^M {{\theta _m} = 0}$ and ${\tilde v_1}(-M) =  \cdots  = {\tilde v_1}( - 1) = {\tilde v_1}(0)$ that
${{\tilde v}_1}(k) = {{\tilde v}_1}(0)$ holds for all $k\ge 0$, and
\[{\tilde x}_1(k) = {\tilde x}_1(0) +  k\tau {\tilde v}_1(0).\]
Then we have
\begin{equation} \nonumber
\begin{aligned}
&{\bm{w}_1}{\tilde x}_1(k) = \frac{1}{N}{\bm{1}\bm{1}^T}\bm{x}(0) + k\tau \frac{1}{N}{\bm{1}\bm{1}^T}\bm{v}(0) = \bar x\bm{1} + k\tau \bar v\bm{1},\\
&{\bm{w}_1}\tilde v_1(k) = \frac{1}{N}{\bm{1}\bm{1}^T}\bm{v}(0) = \bar v\bm{1}.
\end{aligned}
\end{equation}
The final state can be written as
\begin{equation}
\begin{array}{l}
\mathop {\lim }\limits_{k \to \infty } \bm{x}(k) = \bar x\bm{1} + \bar v\bm{1} \mathop {\lim }\limits_{k \to \infty } k\tau   +  \mathop {\lim }\limits_{k \to \infty } \sum\limits_{i = 2}^N {{\bm{w}_i}{{\tilde x}_i}(k)} ,\\
\mathop {\lim }\limits_{k \to \infty } \bm{v}(k) = \bar v\bm{1} + \mathop {\lim }\limits_{k \to \infty }\sum\limits_{i = 2}^N {{\bm{w}_i}{{\tilde v}_i}(k)} .
\end{array}
\end{equation}
Substituting
$\mathop {\lim }\limits_{k \to \infty } {{\tilde x}_i}(k) = 0,
\mathop {\lim }\limits_{k \to \infty } {{\tilde v}_i}(k) = 0,i=2,\ldots,N$
into (7), the sufficiency is proved directly.
Suppose that there is a scalar $s\!\in\!\{2,\ldots,N\}$ that satisfies $\mathop {\lim }\limits_{k \to \infty } {{\tilde x}_s}(k)\!=\! \delta_1  \!\ne\! 0$ or $\mathop {\lim }\limits_{k \to \infty } {{\tilde v}_s}(k)\!=\! \delta_2  \!\ne\! 0$.
Since $\left\langle {{\bm{w}_i},{\bm{w}_j}} \right\rangle  = 0$ holds for any $i\ne j$, then
$\mathop {\lim }\limits_{k \to \infty } \bm{x}(k)- \bar x \bm{1} - \bar v \mathop {\lim }\limits_{k \to \infty } k\tau \bm{1} \!\ne\! \bm{0}$, or
$\mathop {\lim }\limits_{k \to \infty } \bm{v}(k)- \bar v \bm{1} \!\ne\! \bm{0}$.
This contradiction proves the necessity.
\end{proof}

Denote $\bm{y}_i(k)=[{{\tilde x}_i}(k),{{\tilde v}_i}(k),{{\tilde v}_i}(k-1),\cdots,{{\tilde v}_i}(k-M)]^T$.
The consensus problem can be converted to the simultaneous stabilization problem of $N\!-\!1$ systems of $M+2$ dimensions:
\begin{equation}
{\bm{y}_i}(k) = \Phi ({\lambda _i}){\bm{y}_i}(k - 1),i = 2, \ldots ,N,
\end{equation}
where
\begin{equation}
\Phi ({\lambda _i}) = \left[ {\begin{array}{*{20}{c}}
1&\tau &0& \,\,\cdots\,\, &0\\
{ - \tau {\varepsilon _1}{\lambda _i}}&{1\! -\! \tau {\varepsilon _2}{\lambda _i} \!+\! \tau {\theta _0}}&{\tau {\theta _1}}& \,\,\cdots\,\, &{\tau {\theta _M}}\\
0&1&0& \,\,\cdots\,\, &0\\
 \vdots & \ddots & \ddots & \,\,\ddots\,\, & \vdots \\
0& \cdots &0&\,\,1\,\,&0
\end{array}} \right].
\end{equation}

Consensus can be achieved if and only if the eigenvalues of $\Phi ({\lambda _i})$ are all within the unit circle.
The closer the eigenvalues of $\Phi ({\lambda _i})$ is to the origin, the faster the system (4) achieves consensus.
Thus, we define the consensus convergence rate as \cite{XIAO200465,4627467,9763023}
\begin{equation}
{r_M} = \mathop {\max }\limits_{i = 2, \ldots N} {\rho}(\Phi(\lambda_i) ),
\end{equation}
where ${\rho } \left( \cdot\right)$ denotes the spectral radius.

\begin{remark}
The state of system (8) satisfies
\[
\mathop {\lim }\limits_{k \to \infty }\! \left\| {{\bm{y}_i}(k)} \right\| \! \approx \!  \mathop {\lim }\limits_{k \to \infty } \rho {(\Phi ({\lambda _i}))^k}\! \left\| {{\bm{y}_i}(0)} \right\| \! \le \! \mathop {\lim }\limits_{k \to \infty } ({r_M})^k \! \left\| {{\bm{y}_i}(0)} \right\|
\]
 for all $i=2,\ldots,N$.
It means that the consensus error
\[\bm{e}(k) = \left[ {\begin{array}{*{20}{c}}
{\bm{x}(k) - (\bar x + k\tau \bar v)\bm{1}}\\
{\bm{v}(k) - \bar v\bm{1}}
\end{array}} \right]\in \mathbb{R}^{2N}\]
is bounded by $r_{M}$, that is, $\left\Vert e(k)\right\Vert
=O((r_{M})^{k})$ for $k$ large enough.
%In terms of the control theory, $r_{M}$ determines the settling time of a system.
The condition $r_M<1$ ensures that system (8) converges to zero and system (4) achieves consensus.
The smaller the convergence rate $r_{M}$ is, the faster the consensus error $\bm{e}(k)$ converges.
\end{remark}

\begin{lemma}
Let $\Phi(\lambda_i)$ be defined by (9).
The characteristic polynomial of $\Phi(\lambda_i)$ is given by
\begin{equation}
d(z,\lambda_i ) = {z^M}[(z - 1)(z - 1 + \tau {\varepsilon _2}{\lambda _i} - \sum\limits_{m = 0}^M {\tau {\theta _m}{z^{ - m}}} ) + {\tau ^2}{\varepsilon _1}{\lambda _i}].
\end{equation}
\end{lemma}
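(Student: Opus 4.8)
The plan is to compute the characteristic polynomial $d(z,\lambda_i)=\det\!\big(zI_{M+2}-\Phi(\lambda_i)\big)$ by exploiting the shift-register (companion-type) structure carried by the last $M$ rows of $\Phi(\lambda_i)$ and then invoking the block-determinant identity of Lemma 2.

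First I would write $zI_{M+2}-\Phi(\lambda_i)$ out explicitly and apply a simultaneous permutation of its rows and columns that brings the memory-register coordinates (indices $3,\dots,M+2$) to the front; since a symmetric permutation leaves the determinant unchanged, this costs nothing. In the permuted matrix the leading $M\times M$ block $Q_1$ is lower bidiagonal, with $z$ on the diagonal and $-1$ on the first subdiagonal, so $\det Q_1=z^M$ and $Q_1$ is nonsingular whenever $z\neq 0$. The remaining blocks are very sparse: $Q_2$ (size $M\times 2$) has a single nonzero entry, equal to $-1$, in position $(1,2)$; $Q_3$ (size $2\times M$) has zero first row and second row $(-\tau\theta_1,\dots,-\tau\theta_M)$; and the trailing $2\times 2$ block is
\[ Q_4=\begin{bmatrix} z-1 & -\tau\\ \tau\varepsilon_1\lambda_i & z-1+\tau\varepsilon_2\lambda_i-\tau\theta_0\end{bmatrix}. \]

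By Lemma 2, $d(z,\lambda_i)=\det Q_1\cdot\det\!\big(Q_4-Q_3Q_1^{-1}Q_2\big)=z^M\det\!\big(Q_4-Q_3Q_1^{-1}Q_2\big)$. Because $Q_2$ has rank one, only the first column of $Q_1^{-1}$ is needed, and for the bidiagonal $Q_1$ that column is $(z^{-1},z^{-2},\dots,z^{-M})^{T}$; a short computation then shows that $Q_3Q_1^{-1}Q_2$ is the $2\times 2$ matrix whose only nonzero entry is $\sum_{m=1}^{M}\tau\theta_m z^{-m}$ in the $(2,2)$ slot. Subtracting it from $Q_4$ turns the $(2,2)$ entry into $z-1+\tau\varepsilon_2\lambda_i-\sum_{m=0}^{M}\tau\theta_m z^{-m}$ (the $m=0$ term being the $-\tau\theta_0$ already present in $Q_4$), so the $2\times 2$ determinant equals $(z-1)\big(z-1+\tau\varepsilon_2\lambda_i-\sum_{m=0}^{M}\tau\theta_m z^{-m}\big)+\tau^2\varepsilon_1\lambda_i$; multiplying by $z^M$ reproduces (11). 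Since both sides of (11) are polynomials in $z$ that agree for every $z\neq 0$, they agree identically, which also settles $z=0$.

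The only delicate point is bookkeeping: getting the permutation and the induced sparsity patterns of $Q_1,\dots,Q_4$ straight, and pinning down the single relevant column of $Q_1^{-1}$ so that $Q_3Q_1^{-1}Q_2$ genuinely collapses to the one term $\sum_{m=1}^{M}\tau\theta_m z^{-m}$. An alternative that avoids the permutation is to row-reduce $zI_{M+2}-\Phi(\lambda_i)$ directly: eliminate the entries $-\tau\theta_M,-\tau\theta_{M-1},\dots,-\tau\theta_1$ in the second row from right to left by adding suitable ($1/z$-scaled) multiples of the bidiagonal rows, which telescopes the coefficient $-\sum_{m=1}^{M}\tau\theta_m z^{-m}$ into the $(2,2)$ entry and leaves a block-lower-triangular matrix whose diagonal blocks are the same $2\times 2$ block and an $M\times M$ bidiagonal block of determinant $z^M$. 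That route needs no matrix inverse but is more tedious to typeset, so I would present the Lemma 2 argument as the primary proof.
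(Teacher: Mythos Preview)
Your proof is correct and follows essentially the same route as the paper: both apply Lemma~2 to take the Schur complement with respect to the $M\times M$ lower-bidiagonal block, yielding the factor $z^{M}$ times the same $2\times 2$ determinant. The only cosmetic difference is that the paper keeps the natural block ordering (using the form $\det Q_4\cdot\det(Q_1-Q_2Q_4^{-1}Q_3)$ and writing out all of $Q_4^{-1}$), whereas you permute first so that Lemma~2 applies verbatim and extract only the single column of the inverse that is actually needed.
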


\begin{proof}
Denote
\begin{equation} \nonumber
\begin{aligned}
&{Q_{1}} = \left[ {\begin{array}{*{20}{c}}
{z - 1}&{ - \tau }\\
{\tau {\varepsilon _1}{\lambda _i}}&{z \!-\! 1 \!+\! \tau {\varepsilon _2}{\lambda _i} \!-\! \tau {\theta _0}}
\end{array}} \right],\\
&{Q_{2}} = \left[ {\begin{array}{*{20}{c}}
0& \cdots &0\\
{ - \tau {\theta _1}}& \cdots &{ - \tau {\theta _M}}
\end{array}} \right],
\\
&{{Q_{3}} = \left[ {\begin{array}{*{20}{c}}
0&{ - 1}\\
0&0\\
 \vdots & \vdots \\
0&0
\end{array}} \right],{Q_{4}} = \left[ {\begin{array}{*{20}{c}}
z&{}&{}&{}\\
{ - 1}&z&{}&{}\\
{}& \ddots & \ddots &{}\\
{}&{}&{ - 1}&z
\end{array}} \right].}
\end{aligned}
\end{equation}
The inverse of $Q_{4}$ is calculated as
\[Q_{4}^{ - 1} = {\left[ {\begin{array}{*{20}{c}}
{{z^{ - 1}}}&{}&{}&{}\\
{{z^{ - 2}}}&{{z^{ - 1}}}&{}&{}\\
 \vdots & \ddots & \ddots &{}\\
{{z^{ - M}}}& \cdots &{{z^{ - 2}}}&{{z^{ - 1}}}
\end{array}} \right]}.\]
According to Lemma 2, the characteristic polynomial of $\Phi ({\lambda _i})$ can be calculated as
\begin{equation} \nonumber
\begin{aligned}
&\det (zI - \Phi ({\lambda _i}))
% \\=& \det \left[ {\begin{array}{*{20}{c}}
%{{Q_1}}&{{Q_2}}\\
%{{Q_3}}&{{Q_4}}
%\end{array}} \right]
\\=& \det ({Q_4}) \cdot \det ({Q_1} - {Q_2}Q_4^{ - 1}{Q_3})
\\=& z^M \det \left[ {\begin{array}{*{20}{c}}
{z - 1}&{ - \tau }\\
{\tau {\varepsilon _1}{\lambda _i}}&{z \!-\! 1 \!+\! \tau {\varepsilon _2}{\lambda _i} \!-\! \sum\limits_{m = 0}^M {\tau {\theta _m}{z^{ - m}}} }
\end{array}} \right]
\\=& z^M [(z - 1)(z - 1 + \tau {\varepsilon _2}{\lambda _i} - \sum\limits_{m = 0}^M {\tau {\theta _m}{z^{ - m}}} ) + {\tau ^2}{\varepsilon _1}{\lambda _i}].
\end{aligned}
\end{equation}
It follows that $\det (zI \!-\! \Phi ({\lambda _i})) \!= \!d(z, \lambda_i).$
\end{proof}

Let $\bar z(d(z,\lambda_i ))$ be the maximum modulus root of $d(z,\lambda_i ) = 0$, i.e.,
\begin{equation}
\bar z(d(z,\lambda_i ))=\max \{\left\vert z \right\vert :d(z,\lambda_i )=0\}, i = 2,\ldots,N.
\end{equation}
The following result can be immediately derived by combing (10), Lemma 4 and Lemma 5.

\begin{theorem}
Consider the second-order MAS (4) on a connected network $\mathcal{G}$ with $\sum\limits_{m = 0}^M {{\theta _m} = 0}$.
Let $d(z,\lambda_i )$ and $\bar z(d(z,\lambda_i ))$ be defined by (11) and (12).
Then
\\(i) consensus is achieved if and only if the roots of \[d(z,{\lambda _i})=0,i=2,\ldots,N\] are all within the unit circle;
\\(ii) the convergence rate $r_M$ in (10) can be computed by \[r_M = \max_{i=2,\ldots ,N} \bar z(d(z,\lambda_i )) .\]
\end{theorem}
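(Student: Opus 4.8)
The plan is to obtain the statement by chaining together the equivalences already in place, so the proof should be short. For part~(i), start from Lemma~4: consensus of (4) on the connected network is reached if and only if $\tilde x_i(k)\to 0$ and $\tilde v_i(k)\to 0$ as $k\to\infty$ for every $i=2,\ldots,N$. The stacked vector $\bm y_i(k)=[\tilde x_i(k),\tilde v_i(k),\tilde v_i(k-1),\ldots,\tilde v_i(k-M)]^T$ has its last $M$ entries equal to time-shifted copies of $\tilde v_i$, so $\bm y_i(k)\to\bm 0$ is equivalent to $\tilde x_i(k)\to 0$ together with $\tilde v_i(k)\to 0$. By the recursion (8), $\bm y_i(k)=\Phi(\lambda_i)^{k}\bm y_i(0)$, hence this autonomous linear system is asymptotically stable precisely when the spectral radius of $\Phi(\lambda_i)$ is strictly less than one, i.e.\ when all eigenvalues of $\Phi(\lambda_i)$ lie in the open unit disk. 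Lemma~5 identifies the characteristic polynomial of $\Phi(\lambda_i)$ with $d(z,\lambda_i)$ from (11), so these eigenvalues are exactly the roots of $d(z,\lambda_i)=0$; combining, consensus holds iff the roots of $d(z,\lambda_i)=0$ are inside the unit circle for all $i=2,\ldots,N$.

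For part~(ii) I would just unwind the definitions. By (10), $r_M=\max_{i=2,\ldots,N}\rho(\Phi(\lambda_i))$, and $\rho(\Phi(\lambda_i))$ is the largest modulus among the eigenvalues of $\Phi(\lambda_i)$, which by Lemma~5 are, counted with multiplicity, the roots of $d(z,\lambda_i)=0$. Thus $\rho(\Phi(\lambda_i))=\bar z(d(z,\lambda_i))$ by the definition (12) of the maximum-modulus root, and substituting yields $r_M=\max_{i=2,\ldots,N}\bar z(d(z,\lambda_i))$.

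The one delicate point is the implication ``$\bm y_i(k)\to\bm 0$ under the admissible initialization $\Rightarrow\rho(\Phi(\lambda_i))<1$'': the standing assumption $\tilde v_i(-M)=\cdots=\tilde v_i(0)$ confines $\bm y_i(0)$ to a two-dimensional affine family rather than all of $\mathbb{R}^{M+2}$, so decay on this family must still be shown to force Schur stability of $\Phi(\lambda_i)$. I expect to handle this through the companion-like structure of $\Phi(\lambda_i)$ in (9): its last $M$ rows are pure shifts, and since $d(1,\lambda_i)=\tau^2\varepsilon_1\lambda_i\neq 0$ for $i\ge 2$ the value $z=1$ is not a root, so the admissible initial segments of the scalar recursion (6) still excite every mode of $d(z,\lambda_i)$; equivalently, if $\rho(\Phi(\lambda_i))\ge 1$ one can exhibit admissible $\tilde x_i(0),\tilde v_i(0)$ producing a non-vanishing trajectory. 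This is the only place where a little work is required; apart from it the theorem is an immediate consequence of Lemmas~4 and~5 together with the definition (10), exactly as the text asserts.
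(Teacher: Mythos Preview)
Your proposal is correct and follows exactly the route the paper takes: Theorem~1 is stated there as an immediate consequence of Lemma~4, Lemma~5, and the definition~(10), with no further argument given. The subtlety you flag about the constrained initialization $\tilde v_i(-M)=\cdots=\tilde v_i(0)$ in the necessity direction is something the paper simply does not address; your sketch for handling it via the companion-like structure of $\Phi(\lambda_i)$ and the observation $d(1,\lambda_i)=\tau^2\varepsilon_1\lambda_i$ is reasonable (modulo the tacit assumption $\varepsilon_1\neq 0$, which is harmless since otherwise $z=1$ is a root and both sides of the equivalence fail).
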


Theorem 1 establishes the direct link between the consensus convergence rate and the roots of $d(z,\lambda_i )=0$.
Then the accelerated consensus problem of second-order MASs can be converted into the optimization problem:
\begin{equation}
\begin{array}{l}
r_M^* =   \mathop {\min }\limits_{{\varepsilon _1},{\varepsilon _2},{\theta _0}, \ldots ,{\theta _M}} {r_M}
=\mathop {\min }\limits_{{\varepsilon _1},{\varepsilon _2},{\theta _0}, \ldots ,{\theta _M}} \mathop {\max }\limits_{i = 2, \ldots N} \bar z(d(z,\lambda_i )).
%\begin{array}{*{20}{c}}
%{s.t.}&\!\!\!\!\begin{array}{l}
%\sum\limits_{m = 0}^M {{\theta _m} = 0}.
%,\\d(z,{\lambda _i})\;\;{\text{is stable for any}}\;\;i = 2, \ldots ,N.
%\end{array}
%\end{array}
\end{array}
\end{equation}
In the next section, we try to find the optimal control parameters ${\varepsilon _1},{\varepsilon _2},{\theta _0}, \ldots ,{\theta _M}$ to minimize the convergence rate $r_M$.

 \section{Accelerated Consensus}

In this section,
explicit formulas of the optimal control parameters and the corresponding convergence rate with one-tap memory are derived.
Then an iterative algorithm is given to derive the control parameters to accelerate the convergence rate for the case of $M$-tap memory.
%Moreover, the lower bound on the worst-case convergence rate is given.
%design an algorithm to minimize the convergence rate for the case of multi-tap memory, and give the lower bound on the worst-case convergence rate.

\subsection{Explicit Formula of the Optimal Convergence Rate with One-tap Memory}

In this subsection,
we give the consensus region of the control parameters, and derive the analytical formulas of the optimal control parameters and the corresponding convergence rate with one-tap memory.

When $M\!=\!1$, the second-order MAS (4) can be written as
\begin{equation}
\begin{aligned}
\bm{x}(k + 1) = &\;\bm{x}(k) + \tau \bm{v}(k),\\
\bm{v}(k + 1) = &\;(I-\tau\varepsilon_2 \mathcal{L})\bm{v}(k) -\tau \varepsilon_1\mathcal{L}\bm{x}(k)
                \\&+ \tau {\theta _0}(\bm{v}(k) - \bm{v}(k - 1)).
\end{aligned}
\end{equation}
Consensus is achieved if and only if the system
\begin{equation} \nonumber
{\bm{y}_i}(k) = \Phi ({\lambda _i}){\bm{y}_i}(k - 1),i = 2, \ldots ,N,
\end{equation}
is stable, where
\begin{equation}
\Phi ({\lambda _i}) = \left[ {\begin{array}{*{20}{c}}
1&\tau &0\\
{ - \tau {\varepsilon _1}{\lambda _i}}&{1\! -\! \tau {\varepsilon _2}{\lambda _i} \!+\! \tau {\theta _0}}&{-\tau {\theta _0}}\\
0&1&0
\end{array}} \right].
\end{equation}
The characteristic polynomial of $\Phi ({\lambda _i})$ is
\begin{equation}
\begin{aligned}
d(z,{\lambda _i}) =& {{z}^3} + (\tau {\varepsilon _2}{\lambda _i} - \tau {\theta _0} - 2){{z}^2}
\\& + ({\tau ^2}{\varepsilon _1}{\lambda _i} - \tau {\varepsilon _2}{\lambda _i} + 1 + 2\tau {\theta _0})z - \tau {\theta _0}.
\end{aligned}
\end{equation}

%The consensus of the second-order MAS (14) is achieved if and only if the roots of $d(z,{\lambda _i})=0$ are all within the unit circle.
The following necessary and sufficient condition for consensus with $M=1$ can be immediately derived by  combing the Jury stability criterion and Theorem 1.
%According to the Jury stability criterion, the necessary and sufficient condition for consensus can be derived as the following inequalities of control parameters:
\begin{lemma}
The consensus of the second-order MAS (14) is achieved if and only if
\[\left\{ {\begin{array}{*{20}{l}}
{\left| {\tau {\theta _0}} \right| <1,}\\
{{\tau ^2}{\varepsilon _1}{\lambda _i} > 0,}\\
{4 + 4\tau {\theta _0} - 2\tau {\varepsilon _2}{\lambda _i} + {\tau ^2}{\varepsilon _1}{\lambda _i} > 0,}\\
{| {{\tau ^2}{\theta _0}^2 \!-\!1} | < | {{\tau ^2}{\varepsilon _2}{\theta _0}{\lambda _i} \!+\! {\tau ^2}{\varepsilon _1}{\lambda _i} \!-\! \tau {\varepsilon _2}{\lambda _i} \!-\! {\tau ^2}{\theta _0}^2 \!+\! 1} |,}
\end{array}} \right.\]
holds for $i=2,\ldots,N.$
\end{lemma}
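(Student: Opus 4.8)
The plan is to obtain the four stated inequalities as a direct translation of the Jury stability criterion (Lemma 1) applied to the cubic characteristic polynomial $d(z,\lambda_i)$ in (16), using Theorem 1(i) to link root location with consensus. First I would note that for $M=1$, Theorem 1(i) says consensus of (14) holds if and only if, for every $i=2,\ldots,N$, all roots of $d(z,\lambda_i)=0$ lie strictly inside the unit circle. Since the leading coefficient of $d(z,\lambda_i)$ is $a_0=1>0$, Lemma 1 applies verbatim with $a_0=1$, $a_1=\tau\varepsilon_2\lambda_i-\tau\theta_0-2$, $a_2=\tau^2\varepsilon_1\lambda_i-\tau\varepsilon_2\lambda_i+1+2\tau\theta_0$, and $a_3=-\tau\theta_0$ read off from (16).

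Next I would substitute these coefficients into the four Jury inequalities and simplify. The condition $|a_3|<a_0$ is immediately $|\tau\theta_0|<1$. For the second and third conditions, the key observation is that the two sums telescope: almost every term of $a_0+a_1+a_2+a_3=d(1,\lambda_i)$ cancels, leaving exactly $\tau^2\varepsilon_1\lambda_i$, so the condition becomes $\tau^2\varepsilon_1\lambda_i>0$; and similarly $a_0-a_1+a_2-a_3=-d(-1,\lambda_i)$ collapses to $4+4\tau\theta_0-2\tau\varepsilon_2\lambda_i+\tau^2\varepsilon_1\lambda_i$, giving the third inequality in the claim.

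For the fourth condition $|a_3^2-a_0^2|<|a_1a_3-a_0a_2|$, I would compute $a_3^2-a_0^2=\tau^2\theta_0^2-1$, then expand $a_1a_3=(\tau\varepsilon_2\lambda_i-\tau\theta_0-2)(-\tau\theta_0)$ and subtract $a_2$; after collecting terms this equals $-\bigl(\tau^2\varepsilon_2\theta_0\lambda_i+\tau^2\varepsilon_1\lambda_i-\tau\varepsilon_2\lambda_i-\tau^2\theta_0^2+1\bigr)$, whose modulus is precisely the right-hand side displayed in the lemma. Since all four inequalities must hold simultaneously for each $i\in\{2,\ldots,N\}$, combining them gives the stated necessary and sufficient condition. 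The only mildly delicate part is the bookkeeping in expanding and simplifying $a_1a_3-a_0a_2$ and in verifying the two telescoping cancellations; there is no conceptual obstacle here, since the explicit characteristic polynomial, the root-location characterization of consensus, and the Jury test are all already available.
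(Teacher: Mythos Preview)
Your proposal is correct and follows exactly the approach the paper takes: the paper simply states that Lemma~6 ``can be immediately derived by combining the Jury stability criterion and Theorem~1,'' and your write-up carries out precisely that substitution of the coefficients of (16) into Lemma~1 and the ensuing simplifications. There is nothing to add; your bookkeeping on $a_1a_3-a_0a_2$ and the two telescoping sums is accurate.
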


Note that inequalities in Lemma 6 yield a three-dimensional consensus region with respect to the parameters $\varepsilon_0,\varepsilon_1,\theta_0$.
Next, we will find the optimal control parameters in this consensus region to minimize the convergence rate.

\begin{theorem}
Consider the second-order MAS (14) on a connected network $\mathcal{G}$.
The optimal consensus convergence rate is
\begin{equation}
r_1^* = \sqrt {1 - \frac{2}{{\sqrt {2{\lambda _N}/{\lambda _2} - 1}  + 1}}} ,
\end{equation}
with the control parameters
\begin{equation}
\begin{aligned}
&{\varepsilon _1^*} = \frac{1}{{{\tau ^2}{\lambda _N}}}(1 - {(r_1^*)^4}),\\
&{\varepsilon _2^*} = \frac{1}{{\tau {\lambda _N}}}({(r_1^*)^4} + {(r_1^*)^2} + 2),\\
&{\theta _0^*} = \frac{1}{\tau }{(r_1^*)^4}.
%\\&{\theta _1^*} =  - {\theta _0^*}.
\end{aligned}
\end{equation}
\end{theorem}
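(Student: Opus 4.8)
The plan is to translate the spectral-radius requirement into polynomial inequalities through the Jury criterion (Lemma 1) after a rescaling, and then carry out the optimization explicitly. For a target rate $r\in(0,1)$, the condition $\bar z(d(z,\lambda_i))<r$ is equivalent to Schur stability of $r^{-3}d(rz,\lambda_i)$; feeding this cubic into Lemma 1 and clearing denominators turns the four Jury inequalities into $|\tau\theta_0|<r^3$, $d(r,\lambda_i)>0$, $d(-r,\lambda_i)<0$, and $|\tau^2\theta_0^2-r^6|<r^2\big|\tau\theta_0(\tau\varepsilon_2\lambda_i-\tau\theta_0-2)+r^2(1-\tau\varepsilon_2\lambda_i+2\tau\theta_0+\tau^2\varepsilon_1\lambda_i)\big|$, with $d(\cdot,\lambda_i)$ as in (16). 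Hence ``$r_1\le r$'' amounts to this system holding for all $i=2,\dots,N$, and by Theorem 1 minimizing $r_1$ over $\varepsilon_1,\varepsilon_2,\theta_0$ is a matter of finding the smallest $r$ for which the system is solvable.

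For the lower bound $r_1\ge r_1^*$ I would only use the system at $\lambda=\lambda_2$ and $\lambda=\lambda_N$. Both $d(\pm r,\lambda)$ and the bracketed term in the fourth inequality are affine in $\lambda$, and the first inequality fixes the sign of $\tau^2\theta_0^2-r^6$ (it is negative), so the fourth inequality may be squared safely. Using the affine inequalities at the two endpoints to express the parameter-dependent slope and intercept of $d(\pm r,\cdot)$, and substituting them into the (now polynomial) fourth condition, I expect the parameters $\varepsilon_1,\varepsilon_2,\theta_0$ to drop out and leave a single scalar inequality in $r$ and $\lambda_N/\lambda_2$ whose solution set is exactly $r\ge r_1^*$, with $r_1^*$ as in (17); pushing $r\downarrow r_1$ then gives $r_1\ge r_1^*$, and equality throughout pins the parameters down to (18).

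It remains to verify that (18) actually attains the rate $r_1^*$. Substituting (18) into (16), the characteristic polynomials factor cleanly: $d(z,\lambda_N)=(z^2-(r_1^*)^2)(z+(r_1^*)^2)$, whose roots are $\pm r_1^*$ and $-(r_1^*)^2$, and $d(z,\lambda_2)=(z-(r_1^*)^2)(z^2-\beta z+(r_1^*)^2)$ with $\beta=\tfrac{2(r_1^*)^2(1+(r_1^*)^2)}{1+(r_1^*)^4}$, whose discriminant $\beta^2-4(r_1^*)^2$ is negative, so its nonreal roots have modulus $r_1^*$; thus $\rho(\Phi(\lambda_2))=\rho(\Phi(\lambda_N))=r_1^*$. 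For an intermediate eigenvalue $\lambda_i\in(\lambda_2,\lambda_N)$ I would check the four inequalities with $r=r_1^*$: two are affine in $\lambda_i$ and therefore hold on the whole interval once they hold at the endpoints, while for the fourth a short check of where the vertex of the piecewise-affine right-hand side sits (it falls outside $[\lambda_2,\lambda_N]$ for these parameters, so the right-hand side is monotone there) reduces it to the endpoints as well. Hence $r_1\le r_1^*$ at (18), which together with the lower bound proves the theorem.

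The main obstacle is the elimination in the second step: tracking the signs inside the absolute values of the fourth Jury inequality, combining the endpoint inequalities so that $\varepsilon_1,\varepsilon_2,\theta_0$ cancel, and showing that what survives simplifies to the closed form $(r_1^*)^2=1-2/(\sqrt{2\lambda_N/\lambda_2-1}+1)$ rather than to some implicit relation. The rescaling, the affine endpoint reductions, and the bookkeeping behind the two explicit factorizations are routine in comparison.
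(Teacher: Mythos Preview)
Your overall strategy matches the paper's: rescale $z=r\tilde z$, apply the Jury criterion to the cubic, observe that each condition is affine in $\lambda_i$, reduce to the two extreme eigenvalues, and eliminate the control parameters to get a scalar inequality in $r$. Your explicit factorizations $d(z,\lambda_N)=(z^2-(r_1^*)^2)(z+(r_1^*)^2)$ and $d(z,\lambda_2)=(z-(r_1^*)^2)(z^2-\beta z+(r_1^*)^2)$ with $\beta=\frac{2(r_1^*)^2(1+(r_1^*)^2)}{1+(r_1^*)^4}$ are correct and give a cleaner attainment check than the paper's, which just says ``by solving the equality system.''

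The gap is in the elimination step, precisely where you flag the main obstacle. Your plan---use the affine conditions $d(\pm r,\lambda)\gtrless 0$ at the endpoints to ``express'' the slope and intercept and then substitute into the fourth Jury inequality so that \emph{all} of $\varepsilon_1,\varepsilon_2,\theta_0$ drop out---does not work as stated: those are inequalities, not equations, so they do not determine the parameters; and in fact $\theta_0$ never drops out algebraically. The paper's device is different and more specific. It rewrites the fourth Jury condition as the \emph{pair} of affine-in-$\lambda$ inequalities (21c) and (21d), whose $\lambda$-coefficients are exact negatives of each other. Sign analysis shows (21c) is tightest at $\lambda_N$ and (21d) at $\lambda_2$; adding $(21\text{c})|_{\lambda_N}/\lambda_N+(21\text{d})|_{\lambda_2}/\lambda_2$ then cancels the $\lambda$-coefficient and hence both $\varepsilon_1$ and $\varepsilon_2$, leaving a quadratic inequality in $\theta_0$ alone (equation (26)) with negative leading coefficient. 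Feasibility of $\theta_0$ is therefore a discriminant condition, and that discriminant simplifies to $(\lambda_2-\lambda_N)r^4+2\lambda_N r^2+\lambda_2-\lambda_N\ge 0$, which solves to $r\ge r_1^*$. The second and third Jury conditions $d(\pm r,\lambda)$ play no role in this lower bound; they are only used afterwards (as equalities at $\lambda_N$) together with (24)--(25) to pin down $(\varepsilon_1^*,\varepsilon_2^*,\theta_0^*)$.
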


\begin{proof}
Let $z = r\tilde z$ in (16), where $0<r<1$.
Then
\begin{equation}
\begin{aligned}
{d}(r\tilde z,\lambda_i) =\,& {{r}^3}{{\tilde z}^3} + (\tau {\varepsilon _2}{\lambda _i} - \tau {\theta _0} - 2){{r}^2}{{\tilde z}^2}
\\&+ ({\tau ^2}{\varepsilon _1}{\lambda _i} - \tau {\varepsilon _2}{\lambda _i} + 1 + 2\tau {\theta _0}){r}\tilde z - \tau {\theta _0}.
\end{aligned}
\end{equation}
The roots of ${d}(r\tilde z,\lambda_i)=0$ are within the unit circle if and only if the roots of ${d}(z,\lambda_i)=0$ are within the circle with radius $r$.
According to the Jury stability criterion,
the optimization problem of the convergence rate can be written as
\begin{eqnarray}
\!\!\!\!\!\!\!\!&&\min \limits_{{\varepsilon _1}.{\varepsilon _2},{\theta _0}}\quad r
\end{eqnarray}
\begin{subequations}
s.t. \\
\begin{align}
({r} \!- \!\tau {\theta _0}){({r} \!-\! 1)^2} + ({\tau ^2}{\varepsilon _1}{r}+\tau {\varepsilon _2}{{r}^2} - \tau {\varepsilon _2}{r}){\lambda _i} \ge 0,&\\
({r} \!+ \!\tau {\theta _0}){({r} \!+\! 1)^2} + ( {\tau ^2}{\varepsilon _1}{r}- \tau {\varepsilon _2}{{r}^2} \!-\! \tau {\varepsilon _2}{r} ){\lambda _i} \ge 0,&\\
[({{r}^4} - {\tau ^2}\theta _0^2)({{r}^2} + 1) + 2\tau {\theta _0}{{r}^2}({{r}^2} - 1)]\;\;\;\;\;\;\;\;&
\nonumber\\+ ( - \tau {\varepsilon _2}{{r}^4} + {\tau ^2}{\varepsilon _1}{{r}^4} + {\tau ^2}{\varepsilon _2}{\theta _0}{{r}^2}){\lambda _i} \ge 0,&\\
[({{r}^4} + {\tau ^2}\theta _0^2)({{r}^2} - 1) - 2\tau {\theta _0}{{r}^2}({{r}^2} - 1)]\;\;\;\;\;\;\;\;&
\nonumber\\+ (\tau {\varepsilon _2}{{r}^4} - {\tau ^2}{\varepsilon _1}{{r}^4} - {\tau ^2}{\varepsilon _2}{\theta _0}{{r}^2}){\lambda _i} \ge 0,&\\
{r}^3 - \tau{\theta _0}  \ge 0,&\\
{r}^3 + \tau{\theta _0}  \ge 0,&\\
i = 2,\ldots,N .\,& \nonumber
\end{align}
\end{subequations}

Note that the constraints (21a)-(21d) are all linear with respect to $\lambda_i$.
Consider a simple function
\[g(\lambda)= a _1 + a_2 \lambda, \,\,\,\,{\lambda _2} \le  \lambda \le {\lambda _N}.\]
When $a _1\ge 0$, $g(\lambda)\ge 0$ holds if and only if $g(\lambda_N)\ge 0$.
When $a _1\le 0$, $g(\lambda)\ge 0$ holds if and only if $g(\lambda_2)\ge 0$.

Since
%$({r} - \tau {\theta _0}){({r} - 1)^2}>0$,
$({r} - \tau {\theta _0}){({r} - 1)^2}>({r^3} - \tau {\theta _0}){({r} - 1)^2}\ge 0,$
the inequality (21a) holds if and only if
\begin{equation}
({r} - \tau {\theta _0}){({r} - 1)^2} + ({\tau ^2}{\varepsilon _1}{r}+ \tau {\varepsilon _2}{{r}^2} - \tau {\varepsilon _2}{r} ){\lambda _N} \ge 0.
\end{equation}
Since $({r} + \tau {\theta _0}){({r} - 1)^2}>({r^3} + \tau {\theta _0}){({r} - 1)^2}\ge 0,$
%\[({r} + \tau {\theta _0}){({r} - 1)^2}>({r^3} + \tau {\theta _0}){({r} - 1)^2}\ge 0,\]
the inequality (21b) holds if and only if
\begin{equation}
({r} + \tau {\theta _0}){({r} + 1)^2} + ( {\tau ^2}{\varepsilon _1}{r}- \tau {\varepsilon _2}{{r}^2} - \tau {\varepsilon _2}{r}){\lambda _N} \ge 0.
\end{equation}
Since $({r^4} - {\tau ^2}\theta _0^2)({r^2} + 1) + 2\tau {\theta _0}{r^2}({r^2} - 1)\ge ({r^4} - {r^6})({r^2} + 1) + 2{r^5}({r^2} - 1)> 0,$
%\[\begin{array}{l}
%({r^4} - {\tau ^2}\theta _0^2)({r^2} + 1) + 2\tau {\theta _0}{r^2}({r^2} - 1)\\
% \,\,\ge ({r^4} - {r^6})({r^2} + 1) + 2{r^5}({r^2} - 1) > 0,
%\end{array}\]
the inequality (21c) holds if and only if
\begin{equation}
\begin{aligned}
&({r^4} - {\tau ^2}\theta _0^2)({r^2} + 1) + 2\tau {\theta _0}{r^2}({r^2} - 1)
\\&+ ( - \tau {\varepsilon _2}{r^4} + {\tau ^2}{\varepsilon _1}{r^4} + {\tau ^2}{\varepsilon _2}{\theta _0}{r^2}){\lambda _N} \ge 0.
\end{aligned}
\end{equation}
Since $({r^4} + {\tau ^2}\theta _0^2)({r^2} - 1) - 2\tau {\theta _0}{r^2}({r^2} - 1)\le ({r^4} + {r^6})({r^2} - 1) - 2{r^5}({r^2} - 1)< 0,$
%\[\begin{array}{l}
%({r^4} + {\tau ^2}\theta _0^2)({r^2} - 1) - 2\tau {\theta _0}{r^2}({r^2} - 1)\\
% \,\,\le ({r^4} + {r^6})({r^2} - 1) - 2{r^5}({r^2} - 1) < 0,
%\end{array}\]
the inequality (21d) holds if and only if
\begin{equation}
\begin{aligned}
&({{r}^4} + {\tau ^2}\theta _0^2)({{r}^2} - 1) - 2\tau {\theta _0}{{r}^2}({{r}^2} - 1)
\\&+ (\tau {\varepsilon _2}{{r}^4} - {\tau ^2}{\varepsilon _1}{{r}^4} - {\tau ^2}{\varepsilon _2}{\theta _0}{{r}^2}){\lambda _2} \ge 0.
\end{aligned}
\end{equation}
To eliminate parameters $\varepsilon_1$ and $\varepsilon_2$,
we add (24) times $1/\lambda_N$ and (25) times $1/\lambda_2$, and have
%we multiply inequality (24) by $1/\lambda_N$, multiply inequality (25) by $1/\lambda_2$, and add them.
%Then we have
\begin{equation}
\begin{aligned}
&\left( {\frac{{{\tau ^2}({r^2} \!-\! 1)}}{{{\lambda _2}}} \!- \!\frac{{{\tau ^2}({r^2} \!+ \!1)}}{{{\lambda _N}}}} \right)\theta _0^2 + 2\tau \left( {\frac{1}{{{\lambda _N}}} \!-\! \frac{1}{{{\lambda _2}}}} \right)({r^4} \!- \!r^2){\theta _0}
\\&\;\;\;+ {r^4}\left( {\frac{{{r^2} \!+ \! 1}}{{{\lambda _N}}} \!+\! \frac{{{r^2} \!- \!1}}{{{\lambda _2}}}} \right) \ge 0.
\end{aligned}
\end{equation}
The parameter $\theta_0$ in (26) has a real solution set if and only if
\begin{equation}
({\lambda _2} - {\lambda _N}){r^4} + 2{\lambda _N}{r^2} + {\lambda _2} - {\lambda _N} \ge 0.
\end{equation}
Let $t = r^2$ in (27), and
\[f(t)=({\lambda _2} - {\lambda _N}){t^2} + 2{\lambda _N}{t} + {\lambda _2} - {\lambda _N}, \;\;t \in (0,1).\]
%First, consider ${\lambda _2} \ne {\lambda _N}$.
Since ${\lambda _2} - {\lambda _N}< 0$ and $\frac{{{\lambda _N}}}{{{\lambda _N} - {\lambda _2}}} > 1$,
the solution set of $f(t)\ge 0$ is
\begin{equation} \nonumber
 {\frac{{{\lambda _N} - \sqrt {{\lambda _2}(2{\lambda _N} - {\lambda _2})} }}{{{\lambda _N} - {\lambda _2}}}}\le t <1.
 \end{equation}
 It follows that
\begin{equation} \nonumber
r = \sqrt{t} \ge \sqrt {1 - \frac{2}{{\sqrt {2{\lambda _N}/{\lambda _2} - 1}  + 1}}}  .
  \end{equation}
By solving
\begin{equation} \nonumber
\begin{aligned}
({r} \!- \!\tau {\theta _0}){({r} \!-\! 1)^2} + ({\tau ^2}{\varepsilon _1}{r}+\tau {\varepsilon _2}{{r}^2} - \tau {\varepsilon _2}{r}){\lambda _N} = 0,&\\
({r} \!+ \!\tau {\theta _0}){({r} \!+\! 1)^2} + ( {\tau ^2}{\varepsilon _1}{r}- \tau {\varepsilon _2}{{r}^2} \!-\! \tau {\varepsilon _2}{r} ){\lambda _N} = 0,&\\
({{r}^4} - {\tau ^2}\theta _0^2)({{r}^2} + 1) + 2\tau {\theta _0}{{r}^2}({{r}^2} - 1)\;\;\;\;\;\;\;\;&
\\+ ( - \tau {\varepsilon _2}{{r}^4} + {\tau ^2}{\varepsilon _1}{{r}^4} + {\tau ^2}{\varepsilon _2}{\theta _0}{{r}^2}){\lambda _N}=0,&\\
({{r}^4} + {\tau ^2}\theta _0^2)({{r}^2} - 1) - 2\tau {\theta _0}{{r}^2}({{r}^2} - 1)\;\;\;\;\;\;\;\;&
\\+ (\tau {\varepsilon _2}{{r}^4} - {\tau ^2}{\varepsilon _1}{{r}^4} - {\tau ^2}{\varepsilon _2}{\theta _0}{{r}^2}){\lambda _2}= 0,&
\end{aligned}
\end{equation}
we can get the optimal convergence rate $r_1^*$ as in (17)
with the control parameters as in (18).
It is verified that the remaining constraints (21e) and (21f)
%\[\begin{array}{l}
%{(r_1^*)^3} - \tau \theta _0^* = {(r_1^*)^3} - {(r_1^*)^4} > 0\\
%{(r_1^*)^3} + \tau \theta _0^* = {(r_1^*)^3} + {(r_1^*)^4} > 0
%\end{array}\]
are satisfied.
This completes the proof.
\end{proof}

\begin{remark}
In \cite{eichler2014closed,dai2022fast}, authors have derived the optimal convergence rate
\[r_0^* = \sqrt {1 - \frac{2}{{{\lambda _N}/{\lambda _2} + 1}}} \]
without memory.
Note that
\[r_1^* = \sqrt {1 - \frac{2}{{\sqrt {2{\lambda _N}/{\lambda _2} - 1}  + 1}}} .\]
Since  $\frac{{2{\lambda _N}}}{{{\lambda _2}}} < {\left( {\frac{{{\lambda _N}}}{{{\lambda _2}}}} \right)^2} + 1$,  we have $\sqrt {\frac{{2{\lambda _N}}}{{{\lambda _2}}} - 1}  < \frac{{{\lambda _N}}}{{{\lambda _2}}}$.
It follows that $r_1^*<r_0^*$.
This means that introducing one-tap memory into the control protocol can further accelerate the convergence rate of the second-order MAS.
Moreover,   for the case of ${\lambda _2}/{\lambda _N} \to 0$, we have
\[\frac{{{{(r_1^*)}^2}}}{{{{(r_0^*)}^2}}} = 1 - \frac{{{{\left( {\sqrt {\frac{{2{\lambda _N}}}{{{\lambda _2}}} - 1}  - 1} \right)}^2}}}{{\left( {\sqrt {\frac{{2{\lambda _N}}}{{{\lambda _2}}} - 1}  + 1} \right)\left( {\frac{{{\lambda _N}}}{{{\lambda _2}}} - 1} \right)}} \approx 1 - \sqrt {\frac{{{2\lambda _2}}}{{{\lambda _N}}}} .\]
%\[\frac{{1 - {{(r_0^*)}^2}}}{{1 - {{(r_1^*)}^2}}} = \frac{{\frac{{2{\lambda _2}}}{{{\lambda _N}}}\left( {1 - \frac{{{\lambda _2}}}{{{\lambda _N}}}} \right)}}{{\left( {\sqrt {\frac{{2{\lambda _2}}}{{{\lambda _N}}} - {{\left( {\frac{{{\lambda _2}}}{{{\lambda _N}}}} \right)}^2}}  - \frac{{{\lambda _2}}}{{{\lambda _N}}}} \right)\left( {\frac{{{\lambda _2}}}{{{\lambda _N}}} + 1} \right)}} \approx \sqrt {\frac{{2{\lambda _2}}}{{{\lambda _N}}}} .\]
\end{remark}

\begin{remark}
The optimal convergence rate $r_1^*$ is related to the eigenratio $\lambda_2/\lambda_N$ of the graph Laplacian.
A larger $\lambda_2/\lambda_N$ corresponds to better network connectivity, leading to a smaller value of $r_1^*$.
When ${\lambda _2}/{\lambda _N} \to 1 $, we have $r_1^*\to 0$.
%If we want to further decrease the value of $r_1^*$ on networks with the same number of nodes and the same number of edges,
%we may need to search for best network topologies with optimal synchronizability \cite{9732318}.
\end{remark}

\begin{remark}
Since the second-order MAS (14) can achieve consensus with the optimal convergence rate $r_1^*$ by applying the control parameters (18), the sampling period $\tau$ determines the overall convergence time.
If $\tau \to 0$, which implies the infinite band-width communication, then $\varepsilon _1^*,\varepsilon _2^*,\theta_0^* \to \infty$ and consensus will be achieved with arbitrarily fast convergence speed.
\end{remark}

\subsection{Iterative Algorithm of Control Parameters for Accelerating the Convergence Rate}

In this subsection, for the case of $M$-tap memory,
an iterative algorithm based on gradient descent is given to derive the control parameters to accelerate the convergence rate.

Recall that we need to design the control parameters $\varepsilon_1,\varepsilon_2$, $\theta_0,\ldots,\theta_M$ to make the convergence rate $r_M$ as small as possible.
For $M=1$, we have given explicit formulas of the optimal convergence rate and control parameters in the previous subsection.
However, when $M > 1$, it is difficult to derive analytical formulas for the convergence rate and control parameters.
%, since the necessary and sufficient condition of the Jury stability criterion is too complex in high-order polynomials.
Therefore, we try to design an optimization algorithm to obtain numerical solutions for them.

Before giving the algorithm, we introduce some notations.
Denote \[F(\Theta ) = {r_M} = \mathop {\max }\limits_{i \in \{ 2, \ldots N\} } \rho (\Phi ({\lambda _i})),\]
where
$\Theta = [\varepsilon_1,\varepsilon _2,\theta _0, \ldots ,\theta _{M\!-\!1}]\in \mathbb{R}^{1\times (M+2)}$ represents the stack of control parameters.
Then
\[
\nabla F = {[\nabla {F_1}, \ldots ,\nabla {F_{M\!+\!2}}]^T \in \mathbb{R}^{M+2}},
\]
where $\nabla F_j= \frac{{F(\Theta + \bm{\delta}(j)) - F(\Theta)}}{{{\delta}}}$, and
$\bm{\delta}(j) \in \mathbb{R}^{1\times ({M\!+\!2})}$ is a row vector whose elements are $0$ except the $j$-th term is a tiny positive scalar $\delta$.
%Then the algorithm based on gradient descent described in Algorithm 1 can give a numerical solution to the optimization problem (13).
Next, Algorithm 1 can obtain the control parameters to accelerate the convergence rate.
%gives a numerical solution for the consensus convergence rate, which is faster than that with one-tap memory.

\begin{algorithm}[h]
	\caption{Iterative algorithm of control parameters for accelerating the convergence rate}
	\label{alg::Gradient}
	\begin{algorithmic}[1]
		\Require
        nonzero eigenvalues of graph Laplacian $\lambda_i$;
        number of iterations $T$;
        sampling period $\tau$;
        a tiny positive scalar $\delta$;
        memory tap $M$;
        the learning rate $\alpha$;
        the initial control parameter $\Theta^{(0)}$
		\Ensure
		$r^*=F{(\Theta^{(T)})}$, $\Theta^*=\Theta^{(T)}$
		\State Calculate $F(\Theta^{(0)})$;
		\State \textbf{for} $j=1$ to $M\!+\!2$ do
        \State ~~~~$\nabla F_j^{(0)}=\frac{{F(\Theta^{(0)} + \bm{\delta}(j)) - F(\Theta^{(0)})}}{{{\delta}}}$;
        \State \textbf{end for}
        \State \textbf{for} $t=1$ to $T$ do
		\State ~~~~$\Theta^{(t)}= \Theta^{(t-1)}- \alpha \nabla F^{(t-1)}$;
        \State ~~~~\textbf{for} $j=1$ to $M\!+\!2$ do
		\State ~~~~~~~~$\nabla F^{(t)}_j= \frac{{F(\Theta^{(t)}) + \bm{\delta}(j)) - F(\Theta^{(t)})}}{{{\delta}}}$;
        \State ~~~~\textbf{end for}
		\State \textbf{end for}
	\end{algorithmic}
\end{algorithm}

%\begin{remark}
%Note that the optimization problem (13) is non-convex.
%Algorithm 1 may converge to a local optimal solution.
%We need to change the initial value of the control parameters many times to approximate the optimal convergence rate as much as possible.
%\end{remark}

 \section{Extended to the Formation Control}

In this section, the accelerated consensus with one-tap memory is extended to the accelerated formation control.
The control parameters to achieve the fastest formation are given.

Consider the discrete-time second-order dynamics (1).
Denote $p_i \in \mathbb{R}, i=1,\ldots,N$ be the desired formation, and
${\Delta _{ij}}  = {p_i} - {p_j}$ be the relative position of the desired formation  \cite{ren2007distributed,ren2011distributed,oh2015survey}.
%Define ${\Delta _{ij}}  \buildrel \Delta \over = {p_i} - {p_j}$.
%Here  ${\Delta _{ij}}$  denotes the relative position of the desired formation between agent $i$ and agent $j$ \cite{ren2007distributed,ren2011distributed,oh2015survey}.
%The relative position of desired formation between agent $i$ and agent $j$  can be represented by ${\Delta _{ij}}$ .
We introduce ${\Delta _{ij}}$ into the control protocol (2), and design the following formation control protocol:
\begin{equation}
\begin{aligned}
{u_i}(k) = &{\varepsilon _1}\sum\limits_{j \in \mathcal{N}_i} {{a_{ij}}({x_j}(k) - {x_i}(k)}  - \Delta _{ij})
         \\&+ {\varepsilon _2}\sum\limits_{j \in \mathcal{N}_i}  {{a_{ij}}({v_j}(k) - {v_i}(k)} )
         \\&+ {\theta _0}({v_i}(k) - {v_i}(k\! -\! 1)), \,\,i = 1,\ldots,N.
\end{aligned}
\end{equation}
%The initial states are set as
%${v_i}(-1) = {v_i}(0),i = 1,\ldots,N$.

%$\bm{p}=[p_1,\ldots,p_N]^T$
%Let $\bm{p}_i\in \mathbb{R}, i=1,\ldots,N$ be the desired formation.

Formation is said to be achieved asymptotically if
\begin{equation} \nonumber
\begin{aligned}
&\mathop {\lim }\limits_{k \to \infty }( {x_j}(k) - {x_i}(k) )=  \Delta _{ij},\\
&\mathop {\lim }\limits_{k \to \infty }( {v_j}(k) - {v_i}(k) )= 0, \,\,i = 1, \ldots ,N
\end{aligned}
\end{equation}
holds for any initial state $x_i(0),v_i(0)$.
The goal of the accelerated formation control is to design control parameters $\varepsilon _1,\varepsilon _2,\theta_0$ so that agents can quickly achieve the formation.

Let ${\xi}_i(k) = x_i(k)-p_i$, $\bm{{\xi}}(k)=[\xi_1(k),\ldots,{\xi}_N(k)]^T\in \mathbb{R}^{N}$,
and $\bm{v}(k)=[v_1(k),\ldots,v_N(k)]^T\in \mathbb{R}^{N}$.
Then we have
\begin{equation}
\begin{aligned}
\bm{\xi}(k + 1) = &\;\bm{\xi}(k) + \tau \bm{v}(k),\\
\bm{v}(k + 1) = &\;(I_N-\tau\varepsilon_2 \mathcal{L})\bm{v}(k) -\tau \varepsilon_1 \mathcal{L}\bm{\xi}(k)
                \\&+ \tau {\theta _0}(\bm{v}(k) - \bm{v}(k - 1)).
\end{aligned}
\end{equation}
Formation is achieved if and only if system (29) achieves consensus.

According to the graph Fourier transform,
\begin{equation} \nonumber
\begin{aligned}
{\tilde \xi}_i(k) = {\bm{w}_i^T}\bm{\xi}(k) ,\,\, {\tilde v}_i(k) = {\bm{w}_i^T}\bm{v}(k) , i =1,\ldots,N,
 \end{aligned}
\end{equation}
the state of system (29) in the graph spectrum domain has the iterative form
\begin{equation} \nonumber
\begin{aligned}
{\tilde \xi}_i(k + 1) = &\;{\tilde \xi}_i(k) + \tau {\tilde v}_i(k),\\
{\tilde v}_i(k + 1) = &\;(1-\tau\varepsilon_2 \lambda_i){\tilde v}_i(k) -\tau \varepsilon_1\lambda_i{\tilde \xi}_i(k)
                      \\&\;+ \tau {\theta _0}(\tilde v_i(k) - \tilde v_i(k - 1)),i =1,\ldots,N.
\end{aligned}
\end{equation}
Similar to Lemma 4, formation is achieved if and only if
\[\mathop {\lim }\limits_{k \to \infty } {{\tilde \xi}_i}(k) = 0, \,\,
\mathop {\lim }\limits_{k \to \infty } {{\tilde v}}_i(k) = 0\]
holds for any $i=2,\ldots,N$.

Let $\bm{X}_i(k)=[{{\tilde \xi}_i}(k),{{\tilde v}_i}(k),{{\tilde v}_i}(k-1)]^T \in \mathbb{R}^{3}$.
Then the problem of the formation control  becomes the simultaneous stabilization problem of $N-1$ systems:
\begin{equation}
{\bm{X}_i}(k) = \Phi ({\lambda _i}) {\bm{X}_i}(k - 1),i = 2, \ldots ,N,
\end{equation}
where $\Phi ({\lambda _i})\in \mathbb{R}^{3\times 3}$ is defined in (15).

%Note that the formation control problem of system (31) is finally transformed into the simultaneous stabilization problem of $N\!-\!1$ subsystems in (34).
Similar to (10), the convergence rate to achieve the desired formation can be defined as
${r_1} = \mathop {\max }\limits_{i = 2, \ldots, N} {\rho}(\Phi(\lambda_i) ).$
According to Theorem 2, the following corollary can be directly obtained.

\begin{corollary}
Consider the second-order MAS (1) under the control protocol (28) on a connected network $\mathcal{G}$.
The optimal convergence rate to achieve formation is given by (17),
and the corresponding control parameters are given by (18).
%\begin{equation}
%r^*  = \sqrt {\frac{{1 - \sqrt {(2 - \frac{{{\lambda _2}}}{{{\lambda _N}}})\frac{{{\lambda _2}}}{{{\lambda _N}}}} }}{{1 - \frac{{{\lambda _2}}}{{{\lambda _N}}}}}}  ,
%\end{equation}
%with the control parameters
%\begin{equation}
%\begin{aligned}
%&{\varepsilon _1^*} = \frac{1}{{{\tau ^2}{\lambda _N}}}(1 - {(r^*)^4}),\\
%&{\varepsilon _2^*} = \frac{1}{{\tau {\lambda _N}}}({(r^*)^4} + {(r^*)^2} + 2),\\
%&{\theta _0^*} = \frac{1}{\tau }{(r^*)^4}.
%\end{aligned}
%\end{equation}
\end{corollary}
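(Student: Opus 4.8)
The plan is to reduce the formation problem to the consensus problem already solved in Theorem 2 and then invoke that theorem directly. The starting point is the substitution $\xi_i(k)=x_i(k)-p_i$ introduced just before the statement: since $x_j(k)-x_i(k)-\Delta_{ij}=\xi_j(k)-\xi_i(k)$ by the definition $\Delta_{ij}=p_i-p_j$, the formation protocol (28) becomes, in the $\xi$-coordinates, exactly the one-tap-memory consensus protocol of Section~IV-A, and the closed loop (29) has precisely the same structure as (14) with $\bm x$ replaced by $\bm\xi$ (the velocity channel is untouched, since velocities carry no offset). So I would first spell out this substitution and conclude that (29) is, verbatim, system (14) written for $\bm\xi$.

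Next I would apply the graph Fourier transform exactly as in the paragraph preceding the statement: the per-mode dynamics for $(\tilde\xi_i,\tilde v_i)$ coincide with (6) restricted to $M=1$, so the stacked state $\bm X_i(k)=[\tilde\xi_i(k),\tilde v_i(k),\tilde v_i(k-1)]^T$ obeys $\bm X_i(k)=\Phi(\lambda_i)\bm X_i(k-1)$ with the \emph{same} matrix $\Phi(\lambda_i)$ of (15). Consequently the characteristic polynomial is again $d(z,\lambda_i)$ of (16), and the formation convergence rate $r_1=\max_{i\ge 2}\rho(\Phi(\lambda_i))$ is the same function of $(\varepsilon_1,\varepsilon_2,\theta_0)$ that was optimized in the proof of Theorem~2. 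It then remains only to record that ``formation achieved'' is equivalent to ``system (29) reaches consensus'' in the sense of Definition~1 — observed right after (29) — together with the Fourier-domain characterization $\lim_{k\to\infty}\tilde\xi_i(k)=\lim_{k\to\infty}\tilde v_i(k)=0$ for $i=2,\ldots,N$ that mirrors Lemma~4. Given this equivalence, minimizing $r_1$ over the consensus region is word-for-word the optimization (20)--(21) solved in Theorem~2, so the optimum is attained at the parameters (18) and equals $r_1^*$ in (17).

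I do not expect a genuine obstacle here: the whole content is the observation that the formation offsets cancel in the relative terms and leave the velocity dynamics intact, so the problem is an exact copy of the one already treated. The only point deserving a line of care is the bookkeeping of the averages — the position mean becomes $\bar\xi=\bar x-\frac1N\sum_j p_j$ while the velocity mean stays $\bar v$ — which confirms $x_j(k)-x_i(k)\to\Delta_{ij}$ and $v_j(k)-v_i(k)\to 0$, i.e. that consensus of (29) is precisely the desired formation; no estimate beyond Theorem~2 is required, and the verification of the residual constraints (21e)--(21f) is inherited verbatim from that proof.
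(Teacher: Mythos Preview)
Your proposal is correct and follows exactly the paper's own route: the paper reduces (28) to the consensus system (29) via the shift $\xi_i=x_i-p_i$, observes that the graph-Fourier modes obey $\bm X_i(k)=\Phi(\lambda_i)\bm X_i(k-1)$ with the same $\Phi(\lambda_i)$ as in (15), and then states that the corollary follows ``directly'' from Theorem~2. Your write-up simply spells out that one-line invocation in more detail (including the bookkeeping of the averages), so there is nothing to add.
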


\section{Numerical Examples}

In this section, three examples are used to demonstrate the validity and correctness of the proposed results.

\subsection{Convergence rate with different M}
%\textbf{Example 1:\,\,}
This example compares the consensus convergence rates of the second-order MAS with different $M$.

Randomly generate a network $\mathcal{G}_1$ with $8$ nodes by using the small-world network model, as shown in Fig. 1(a). Set $\tau = 0.1$.
For the case of $M=0$, the optimal convergence rate is given by $r_0^* = \sqrt {1 - \frac{2}{{{\lambda _N}/{\lambda _2} + 1}}}$,
which has been proposed in \cite{eichler2014closed,dai2022fast}.
For the case of $M=1$, the optimal convergence rate $r_1^*$ is given by (17).
For the case of $M\ge 2$, the convergence rate $r_M^*$ is given by Algorithm 1.
%we change the initial values of the control parameters many times to obtain the numerical solution of the convergence rate $r_M^*$.
%the convergence rate $r_M^*$ and corresponding control parameters is calculated by Algorithm 1.
Table I lists the convergence rate $r_M^*$ and control parameters under different memory taps.
Randomly generate the initial state $x_i(0),v_i(0)$ in the interval $[-10,10]$.
The optimal convergence of the consensus error ${\left\| {\bm{e}(k)} \right\|_2}$
under different memory taps is shown in Fig. 2.

It can be observed from Table I and  Fig. 2 that the larger $M$ is, the faster the consensus is achieved.
In addition, the decrease of the convergence rate from $M=0$ to $M=1$ is significant, but the decrease is not so much
when $M> 1$.

\begin{figure}%[!htb]
\setcounter{subfigure}{0}
\centering
\subfigure[The small-world network $\mathcal{G}_1$]{
\includegraphics[scale=0.25]{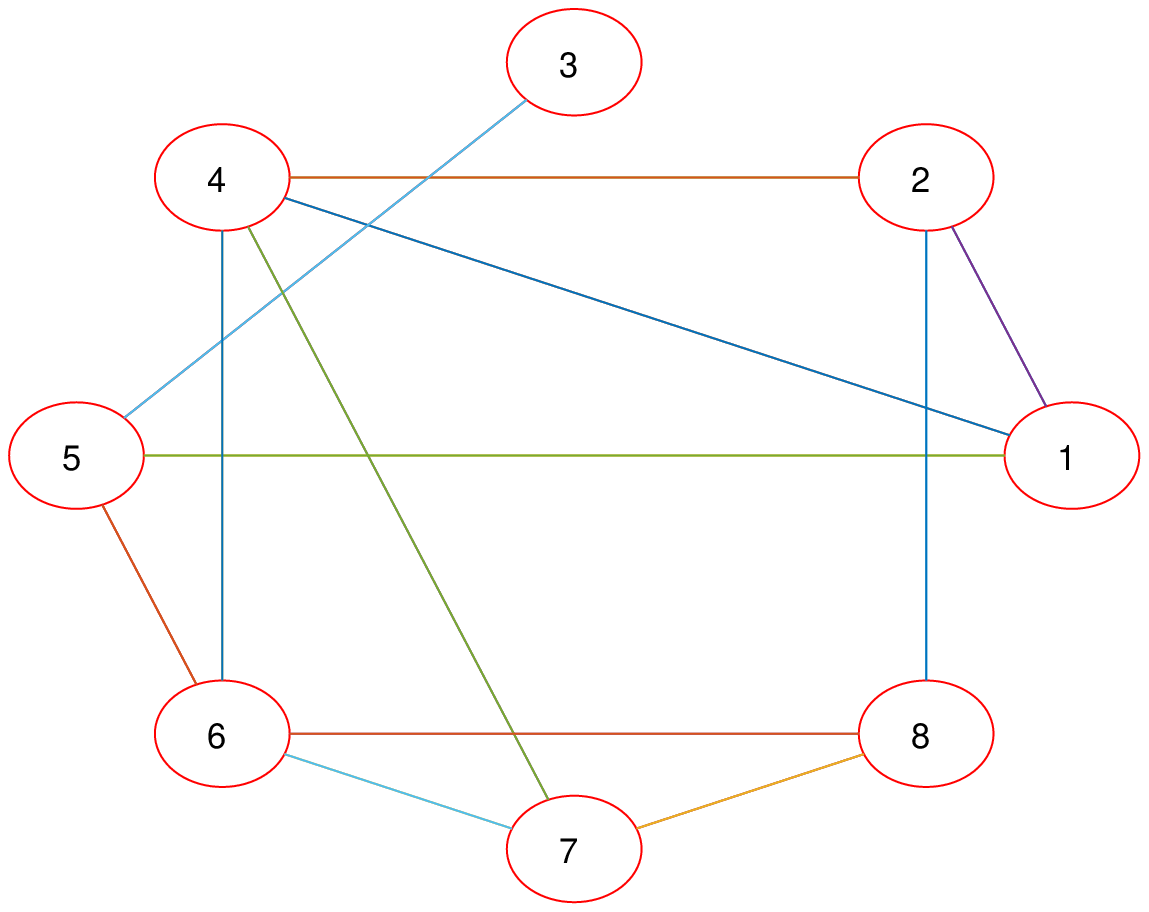}
\label{fig4a}}
%\hspace{0.5 cm}
\subfigure[The BA scale-free network $\mathcal{G}_2$]{
\includegraphics[scale=0.25]{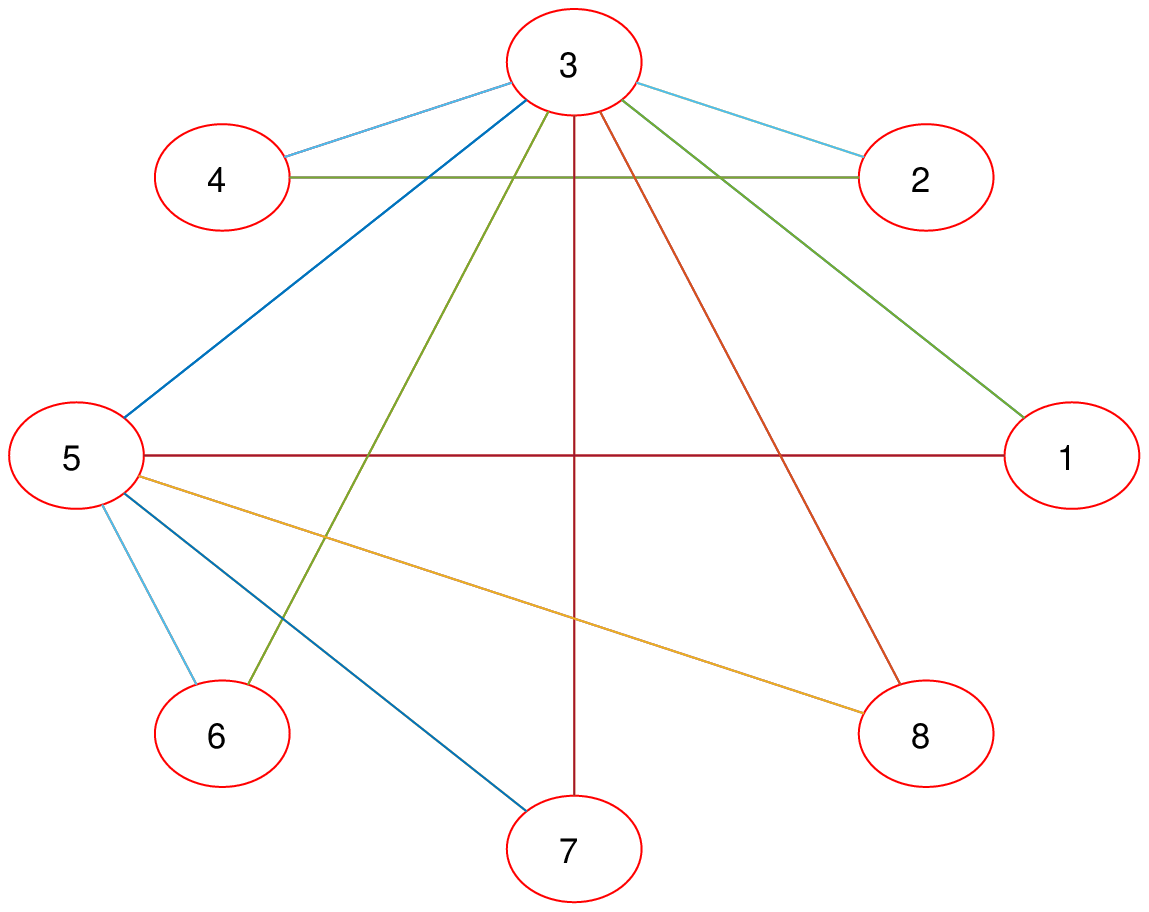}
\label{fig4b}}
  \caption{Networks $\mathcal{G}_1$ and  $\mathcal{G}_2$.}
  \label{fig4}
\end{figure}

%\begin{figure}[!htbp]
%\centering
%\includegraphics[height=6.6cm]{FIG2.eps}
%\caption{The network $\mathcal{G}_1$}
%\label{fig:label}
%\end{figure}

\begin{table}[htbp]
\centering
  \caption{The convergence rate $r_M^*$ and corresponding control parameters under different memory taps}
\begin{tabular}{cccccccc}
\toprule
         & $r_M^*$            & $\varepsilon_1^*$   & $\varepsilon_2^*$   & $\theta_0^*$   & $\theta_1^*$ & $\theta_2^*$  & $\theta_3^*$   \\ \midrule
$M\!=\!0$    & \textbf{0.893}            & 3.428                & 3.380          & N/A         & N/A       & N/A        & N/A       \\ \midrule
$M\!=\!1$    & \textbf{0.779}            & 10.674               &5.029           & 3.684       & -3.684    & N/A     & N/A \\ \midrule
$M\!=\!2$    & \textbf{0.768}            & 11.531               &5.202           & 3.674       & -3.936    & 0.262    & N/A \\ \midrule
$M\!=\!3$    & \textbf{0.761}            & 11.405               &5.023           & 3.843       & -3.990    & -0.117   &0.264 \\
\bottomrule
\end{tabular}
\label{tab:addlabel}
\end{table}

\begin{figure}[!htbp]
\centering
\includegraphics[height=6.6cm]{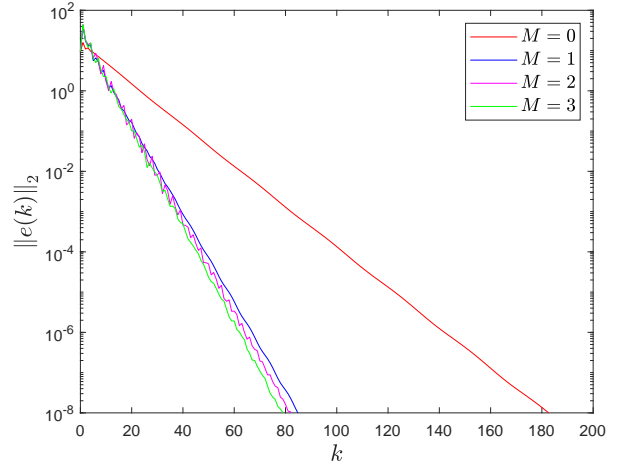}
\caption{Consensus error under different memory taps.}
\label{fig:label}
\end{figure}

\subsection{Convergence rate on different networks}
%\textbf{Example 2:\,\,}
This example calculates the convergence rate $r_M^*$ on different networks.

%\begin{figure}[!htbp]
%\centering
%\includegraphics[height=6.6cm]{BA.eps}
%\caption{The network $\mathcal{G}_2$}
%\label{fig:label}
%\end{figure}

Consider the second-order MAS with $8$ nodes on five example networks:
(i) the network $\mathcal{G}_1$;
(ii) the network $\mathcal{G}_2$ generated by a BA scale-free network model, as shown in Fig. 1(b);
(iii) the path network $\mathcal{G}_3$;
(iv) the circle network $\mathcal{G}_4$;
(v) the complete bipartite network $\mathcal{G}_5$ with $3+5$ vertices.
The values of the eigenratio $\lambda_2/\lambda_N$ of the example networks are
$0.113,0.125,0.040,0.146,0.375$, respectively.
Fig. 3 shows the convergence rate $r_M^*$ on sample networks.

It can be observed that a large eigenratio $\lambda_2/\lambda_N$ of the sample networks corresponds to a small $r^*_M$, and the decrease of $r^*_M$ is most significant when $M=0$ to $M=1$.

\begin{figure}[!htbp]
\centering
\includegraphics[height=6.6cm]{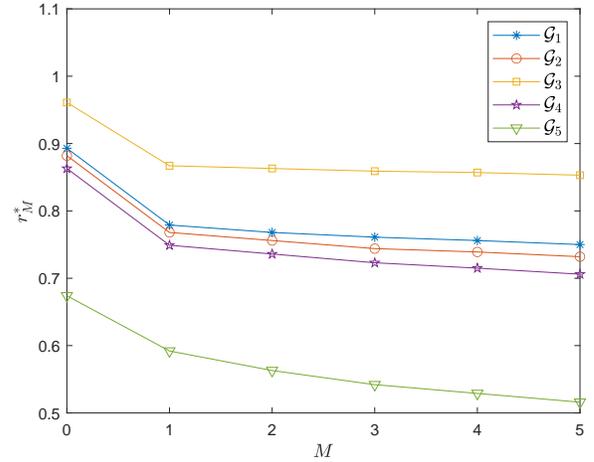}
\caption{$r^*_M$ on sample networks.}
\label{fig:label}
\end{figure}

\subsection{Application in the formation control}
%\textbf{Example 3:\,\,}
This example applies the accelerated consensus algorithm to the formation control.

Consider the second-order MAS (1) under the control protocol (28) on the network $\mathcal{G}_1$.
Let $n=2$.
The states of the agents are initialized as
\[{\bm{x}_i}(0) = {[i,0]^T},{\bm{v}_i}(0) = {[0.1i,8 - 0.1i]^T},i = 1, \ldots, 8.\]
Set $\tau=0.1$.
Set the agent's desired formation for the first 25 seconds as a circle with radius $3$, that is,
\[{\bm{p}_i} = {[3\cos \frac{\pi }{4}i,3\sin \frac{\pi }{4}i]^T},i = 1, \ldots ,8,\]
for $ 0\le k\le 250$.
Set the agent's desired formation to be a square with side 4 after 25 seconds, that is,
\[\begin{array}{l}
{p_1} = \left[ {\begin{array}{*{20}{c}}
0\\0\end{array}} \right],{p_2} = \left[ {\begin{array}{*{20}{c}}
2\\0\end{array}} \right],{p_3} = \left[ {\begin{array}{*{20}{c}}
4\\0\end{array}} \right],{p_4} = \left[ {\begin{array}{*{20}{c}}
4\\2\end{array}} \right],\\
{p_5} = \left[ {\begin{array}{*{20}{c}}
4\\4\end{array}} \right],{p_6} = \left[ {\begin{array}{*{20}{c}}
2\\4\end{array}} \right],{p_7} = \left[ {\begin{array}{*{20}{c}}
0\\4\end{array}} \right],{p_8} = \left[ {\begin{array}{*{20}{c}}
0\\2\end{array}} \right],
\end{array}\]
for $k> 250$.
According to Corollary 1, the control parameters are given by
$\varepsilon_1^*=10.674,\varepsilon_2^*=5.029,\theta_0^*=3.684.$
The movement trajectory of each agent is shown in Fig. 4.
It can be observed that the agents can achieve the desired formation.

\begin{figure}[!htbp]
\centering
\includegraphics[height=6.6cm]{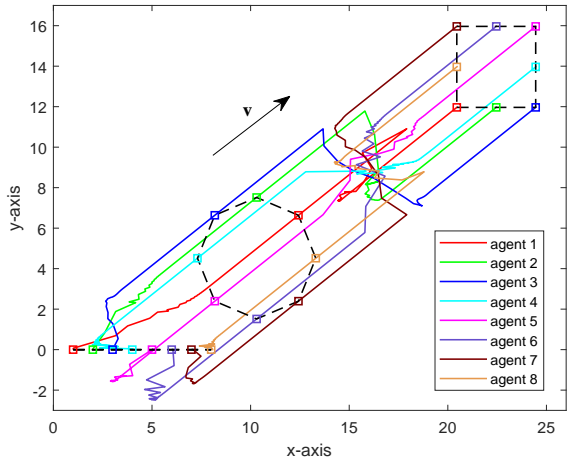}
\caption{Movement trajectory of each agent.}
\label{fig:label}
\end{figure}

\section{Conclusion}

The accelerated consensus problem of second-order multi-agent systems has been studied by introducing the agent's memory into the control protocol.
%The consensus problem has been transformed into the simultaneous stabilization problem of multiple subsystems.
%The direct connection between the consensus convergence rate and control parameters has been established, and the necessary and sufficient condition to achieve consensus has been presented.
For the case of one-tap memory, explicit formulas for the optimal convergence rate and control parameters have been derived by using the Jury stability criterion.
It has been proved that the optimal consensus convergence rate with one-tap memory is faster than that without memory.
For the case of $M$-tap memory,
an iterative algorithm based on gradient descent has been given to derive the control parameters to accelerate the convergence rate.
Furthermore, we have extended the accelerated consensus with one-tap memory to the formation control.
Numerical examples have demonstrated the effectiveness of the agent's memory and validated our theoretical results.

\bibliographystyle{IEEEtran}
\bibliography{IEEEabrv,IEEEexample}

\end{document}